\numberwithin{equation}{section}
\theoremstyle{plain}
\newtheorem{thm}{Theorem}[section]
\newtheorem{theorem}[thm]{Theorem}
\newtheorem{lemma}[thm]{Lemma}
\newtheorem{corollary}[thm]{Corollary}
\newtheorem{proposition}[thm]{Proposition}
\theoremstyle{definition}
\newtheorem{question}[thm]{Question}
\newtheorem{remark}[thm]{Remark}
\newtheorem{definition}[thm]{Definition}
\newtheorem{example}[thm]{Example}
\newtheorem{defn-thm}[thm]{Definition-Theorem}
\newtheorem{conjecture}[thm]{Conjecture}
\newcommand{\R}{{\mathbb R}}
\newcommand{\qtq}[1]{\quad\mbox{#1}\quad}
\newcommand{\Om}{\Omega}
\newcommand{\btheorem}{\begin{theorem}}
\newcommand{\etheorem}{\end{theorem}}
\newcommand{\bproposition}{\begin{proposition}}
\newcommand{\eproposition}{\end{proposition}}
\newcommand{\bdefinition}{\begin{definition}}
\newcommand{\edefinition}{\end{definition}}
\newcommand{\bcorollary}{\begin{corollary}}
\newcommand{\ecorollary}{\end{corollary}}
\newcommand{\bproof}{\begin{proof}}
\newcommand{\eproof}{\end{proof}}
\newcommand{\bremark}{\begin{remark}}
\newcommand{\eremark}{\end{remark}}
\newcommand{\eexample}{\end{example}}
\newcommand{\bexample}{\begin{example}}
\newcommand{\la}{\langle}
\newcommand{\elemma}{\end{lemma}}
\newcommand{\blemma}{\begin{lemma}}
\newcommand{\ra}{\rangle}
\newcommand{\p}{\partial}
\renewcommand{\bar}{\overline}
\renewcommand{\phi}{\varphi}
\newcommand{\ee}{\end{eqnarray*}}
\newcommand{\be}{\begin{eqnarray*}}
\newcommand{\beq}{\begin{equation}}
\newcommand{\eeq}{\end{equation}}
\newcommand{\bd}{\begin{enumerate}}
\newcommand{\ed}{\end{enumerate}}
\renewcommand{\hat}{\widehat}
\renewcommand{\tilde}{\widetilde}
\begin{document}
\title{On  Euler characteristic and fundamental groups of  compact  manifolds}
\makeatletter
\let\uppercasenonmath\@gobble
\let\MakeUppercase\relax
\let\scshape\relax
\makeatother
\author{Bing-Long Chen and Xiaokui Yang}

\address{{Address of Bing-Long Chen: Department of Mathematics,
Sun Yat-sen University, Guangzhou, P.R.China, 510275.}}
\email{\href{mailto:mcscbl@mail.sysu.edu.cn}{{mcscbl@mail.sysu.edu.cn}}}
\address{{Address of Xiaokui Yang: Morningside Center of Mathematics, Institute of
        Mathematics, HCMS, CEMS, NCNIS, HLM, UCAS, Academy of Mathematics and Systems Science, Chinese Academy of Sciences, Beijing 100190, China}}
\email{\href{mailto:xkyang@amss.ac.cn}{{xkyang@amss.ac.cn}}}
\maketitle

\begin{abstract} Let $M$ be a compact Riemannian manifold, $\pi:\tilde
M\>M$ be the universal covering and $\omega$  be a smooth $2$-form
on $M$ with $\pi^*\omega$ cohomologous to zero.  Suppose the
fundamental group $\pi_1(M)$ satisfies certain radial quadratic
(resp. linear) isoperimetric inequality, we show that there exists a
smooth $1$-form $\eta$ on $\tilde M$ of linear (resp. bounded)
growth  such that $\pi^*\omega=d \eta$.
 As applications, we prove that on a compact K\"ahler manifold $(M,\omega)$  with $\pi^*\omega$ cohomologous to zero,  if  $\pi_1(M)$  is
  $\mathrm{CAT}(0)$ or automatic (resp. hyperbolic), then $M$ is K\"ahler non-elliptic (resp. K\"ahler hyperbolic) and the Euler characteristic $(-1)^{\frac{\dim_\R M}{2}}
   \chi(M)\geq 0$ (resp. $>0$).

\end{abstract}

\setcounter{tocdepth}{1} \tableofcontents

\section{Introduction}
In differential geometry, there is a well-known conjecture due to H.
Hopf (e.g. \cite[Problem~10]{Yau82}):
\begin{conjecture} [Hopf] \label{hopf} Let $M$ be a compact, oriented and even dimensional Riemannian manifold of negative sectional curvature $K<0$.
 Then the signed Euler characteristic   $(-1)^{\frac{n}{2}}\chi(M)>0$, where $n$ is the real dimension of $M$.
\end{conjecture}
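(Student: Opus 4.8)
The plan is to attack the conjecture through the Gauss--Bonnet--Chern formula $\chi(M)=\int_M \mathrm{Pf}(\Om)$, where $\mathrm{Pf}(\Om)$ is the Pfaffian of the curvature $2$-form $\Om$. One first hopes that $K<0$ forces the integrand $(-1)^{n/2}\mathrm{Pf}(\Om)$ to be pointwise positive; this is the classical route and it succeeds for $n=2$ (Gauss--Bonnet) and for $n=4$ (Chern, Milnor), where the algebraic expression in the sectional curvatures is manifestly of the correct sign. For $n\geq 6$, however, the Gauss--Bonnet integrand is \emph{not} sign-definite as a function of a curvature operator satisfying only $K<0$ (Geroch exhibited a pinched algebraic curvature tensor whose Pfaffian has the wrong sign), so no purely pointwise/local argument can succeed and the positivity must come from a global mechanism.

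The global mechanism I would pursue is the $L^2$ index theorem on the universal cover. Since $K<0$, the Cartan--Hadamard theorem makes $\tilde M$ diffeomorphic to $\R^n$ and $M$ aspherical, so Atiyah's $\Gamma$-index theorem gives $\chi(M)=\sum_i (-1)^i b_{(2)}^i(\tilde M)$, where the $b_{(2)}^i$ are the $L^2$-Betti numbers of $\tilde M$ for the deck action of $\Gamma=\pi_1(M)$. If one can prove that the reduced $L^2$-cohomology of $\tilde M$ is concentrated in the middle degree $i=n/2$, then $\chi(M)=(-1)^{n/2} b_{(2)}^{n/2}(\tilde M)\ge 0$, with strict inequality as soon as $b_{(2)}^{n/2}>0$; the latter is itself expected for negatively curved $M$ and would follow from producing a single nonzero square-integrable harmonic $n/2$-form.

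The heart of the matter is therefore the off-middle vanishing, and this is exactly where the techniques of the present paper enter: in the \ka case one builds a primitive $\eta$ of bounded (or linear) growth for $\pi^*\om$, iterates the hard Lefschetz operator, and runs a $\bp$-argument to annihilate $L^2$-harmonic forms outside degree $n/2$. To reach the \emph{general Riemannian} statement I would try to substitute a Bochner--Weitzenb\"ock vanishing for the Lefschetz/K\"ahler input, hoping that negative sectional curvature forces the Weitzenb\"ock curvature term on $L^2$ $p$-forms to be positive for $p\neq n/2$. \textbf{The main obstacle is precisely this last step.} The Weitzenb\"ock operator on $p$-forms is a contraction of the full curvature tensor that mixes sectional curvatures of mutually orthogonal planes, and for $1<p<n-1$ its sign is \emph{not} controlled by $K<0$ alone; this is the differential-form incarnation of Geroch's obstruction. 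Absent a K\"ahler (or at least symplectic) structure to supply the Lefschetz decomposition, I do not see how to force the required concentration from $K<0$ by itself, and closing this gap—whether via a new curvature identity, by exploiting the bounded primitive constructed here in a non-K\"ahler setting, or through a delicate analysis of the $L^2$ spectrum on the Cartan--Hadamard space $\tilde M$—is where the genuine difficulty of the Hopf conjecture resides.
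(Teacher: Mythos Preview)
The statement you are addressing is the Hopf \emph{Conjecture}; the paper does not prove it, and indeed explicitly records that it is open in general (only the cases $n=2,4$ are known, and the paper's own results concern the K\"ahler setting, not arbitrary Riemannian manifolds with $K<0$). So there is no ``paper's own proof'' to compare against.

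Your proposal is an honest survey of the known approaches and their limitations rather than a proof, and you are right to flag its own gap. The Gauss--Bonnet route fails for $n\ge 6$ by Geroch's example, exactly as you say. The $L^2$-index route requires the off-middle vanishing of $L^2$-cohomology on $\tilde M$, and your hoped-for substitute---a Bochner--Weitzenb\"ock argument driven by $K<0$ alone---does not work, again as you correctly acknowledge: the curvature term on $p$-forms for $1<p<n-1$ is not sign-controlled by negative sectional curvature. The paper's techniques (bounded or linear-growth primitives of $\pi^*\omega$, combined with Lefschetz/K\"ahler identities) are precisely what produces the vanishing in the K\"ahler case, and there is no known replacement for the Lefschetz mechanism in the purely Riemannian setting. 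In short, your proposal identifies the right landscape and the correct obstruction, but it is not a proof; the conjecture remains open and the paper makes no claim to settle it.
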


\noindent For $n=4$,  Conjecture \ref{hopf} was proven by S. S.
Chern (\cite{Ch55}) (who attributed it to J. W. Milnor).   Not much
has been known in higher dimensions. This conjecture can not be
established just by use of the Gauss-Bonnet-Chern formula (see
\cite{Ger76,Kle76}).  I. M. Singer suggested that in view of the
$L^2$-index theorem  an
 appropriate vanishing theorem for $L^2$-harmonic forms on the universal
 covering of $M$ would imply the conjecture (e.g. \cite{Dod79}).   In
the work \cite{Gro91}, Gromov introduced the notion of K\"ahler
hyperbolicity for  K\"ahler manifolds which means the K\"ahler form
on the universal cover is the exterior differential of some  bounded
$1$-form.  He established that the $L^2$-cohomology groups of the
universal covering of a K\"ahler hyperbolic manifold are not
vanishing only in the middle dimension. Combining this result and
the covering index theorem of Atiyah,  Gromov showed
$(-1)^{\frac{n}{2}}\chi(M)>0$ for a K\"ahler hyperbolic manifold
$M$. One can also show that a compact K\"ahler manifold homotopic to
a compact Riemannian manifold of negative sectional curvature is
K\"ahler hyperbolic and the canonical bundle is ample(\cite{CY17}).

When the sectional curvature of the manifold  is non-positive, it is
natural to consider whether  $(-1)^{\frac{n}{2}}\chi(M)\geq 0$
holds.    It should be noted that when the sectional curvature of a
compact K\"ahler manifold  is nonpositive, \cite{JZ00} and
\cite{CX01} proved independently  that the vanishing theorem of
Gromov type still holds and the Euler characteristic satisfies
$(-1)^{\frac{n}{2}}\chi(M)\geq 0$.   Actually, they proved that the
results also hold if the pulled-back K\"ahler form on the universal
covering is the exterior differential of some  $1$-form with linear
growth, and such manifolds are called  K\"ahler non-elliptic
(\cite{JZ00}) and K\"ahler parabolic (\cite{CX01}) respectively.  In
the sequel, for simplicity, we shall use one of these notions, e.g.,
K\"ahler non-elliptic. Moreover, Jost and Zuo proposed an
interesting  question in \cite[p.~4]{JZ00} that whether there are
some topological conditions to ensure the manifolds to be  K\"ahler
non-elliptic. One can also propose the following

\begin{question} \label{que}Let $M$ be a compact Riemannian manifold, $\pi: \hat{M}\rightarrow M$  a Galois  covering and $G$  the  group of covering transformations.
Let $\omega$ be a closed $q$-form ($q\geq 2$) on $M$ such that
$[\pi^{\ast} \omega]=0$ in $H^{q}_{\mathrm{dR}}(\hat{M})$. Find a
$(q-1)$-form $\eta$ on $\hat{M}$ of least growth order (in terms of
the distance function on $\hat{M}$) such that
$\pi^{\ast}\omega=d\eta$.
\end{question}

\noindent It is clear that the growth order of $\eta$ does not
depend on the choices of the metrics on $M$, and it should depend on
the geometry of the covering  transformation group $G$. Recall that
by a theorem of Gromov, a discrete group $G$ is
 hyperbolic if and only if it satisfies a linear isoperimetric inequality.
An expected answer for Question \ref{que} would be a relation
between certain isoperimetric inequality of the covering
transformation group $G$ and the least growth order of $\eta$.

We need some well-known notions of discrete groups to formulate an
isoperimetric inequality in our setting. Suppose $G=\la S|R\ra$ is a
finitely presented group, where $S$ is a finite symmetric set
generating $G$, $S=S^{-1}$, and $R$ is a finite  set (relator set)
in the free group $F_S$ generated by $S$.  The word metric on $G$
with respect to $S$ is defined as \beq  d_{S}(a,b)=\min\{n:
b^{-1}a=s_1s_2\cdots s_n, s_i\in S\}. \label{wm}\eeq For a word
$w=s_1s_2\cdots s_n$, its length $L(w)$ is defined to be $n$. If the
word $w=s_1\cdots s_n \in F_S$ representing the identity $e$ in $G$,
there are reduced words $v_1,\cdots, v_k$ on $S$ such that \beq
w=\prod_{i=1}^k v_ir_iv_i^{-1},\ \ \ r_i\ \ \text{\emph{or}}\ \
r_i^{-1}\in R \label{length2}\eeq as elements in $F_S$. The
\emph{combinatorial area} $\text{Area}(w)$ of $w$ is the smallest
possible $k$ for equation (\ref{length2}).

\begin{definition}\label{isop}We say a finitely presented  group $G=\la S|R\ra$ satisfies
 a \emph{radial isoperimetric inequality} of degree $p$, if there is a  constant $C>0$  such that
   for any word $w=s_1\cdots
s_n\in F_S$ of length $L(w)=n$ representing the identity $e$ in $G$,
we have
\begin{equation} \label{pii}
 \text{Area}(w)\leq C\sum_{i=1}^{n}\left(d_{S}(\bar{w}(i),
 e)+1\right)^{p-1},
 \end{equation}
where $w(i)=s_1\cdots s_i$ is the $i$-th subword of $w$ and
$\bar{w}(i)\in G$ is the natural image (from $F_S$ to $G$) of the
word $w(i)$.

 \end{definition}

\noindent  It is easy to see that the above definition is
independent of the choice of the generating set $S$. For  $p=1$,
this definition is the same as the usual linear isoperimetric
inequality
$$
\text{Area}(w)\leq C L(w).
$$
For  $p>1$, Definition \ref{isop} is stronger than the usual
isoperimetric inequality. Actually, the radial isoperimetric
inequality (\ref{pii}) can imply
\begin{equation} \label{uii}
 \text{Area}(w)\leq C  (\mathrm{diam}(w)+1)^{p-1} L(w)  \leq CL(w)^{p}.
 \end{equation}
 We obtain in Theorem \ref{t3.1}  a complete answer to Question
 \ref{que} for $q=2$. For simplicity, we only formulate the polynomial growth
 case which  has many important applications.

 \begin{thm} \label{t2} Let $M$ be a compact Riemannian manifold, $\pi: \hat{M} \rightarrow M$
   a Galois  covering with covering transformation group $G$ and $H^1_{\mathrm{dR}}(\hat{M})=0$.
    Let $\omega$ be a closed  $2$-form on $M$ such that $[\pi^{\ast} \omega]=0\in H^2_{\mathrm{dR}}(\hat{M})$.
Assume that the group $G$ satisfies the radial  isoperimetric
inequality (\ref{pii}) of degree $p\geq 1$.
     Then  there exists  a smooth $1$-form $\eta$ on $\hat{M}$  such
    that  $\pi^{\ast}\omega=d\eta$ and
 \beq |\eta|(x)\leq C(d_{\hat{M}}(x,x_0)+1)^{p-1} \label{hh}
 \eeq for all   $x\in \hat{M}$
 where $C$ is a positive constant and  $x_0\in \hat{M}$ is a fixed point.
 \end{thm}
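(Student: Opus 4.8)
The plan is to reduce the estimate to a combinatorial count on the Cayley graph of $G$, transferred geometrically via a good fundamental domain. First I would fix a compact fundamental domain $D\subset\hat M$ for the $G$-action (the closure of one connected component of $\hat M\setminus\pi^{-1}(\text{cut locus})$, say), so that $\hat M=\bigcup_{g\in G} gD$; by compactness of $M$ there is a constant $A$ with $\mathrm{diam}(D)\le A$ and $\sup_D|\pi^*\omega|\le A$. I would also choose a base vertex $x_0\in D$, and for each $g\in G$ note that $d_{\hat M}(x_0,gx_0)$ is comparable, up to additive and multiplicative constants, to the word length $d_S(e,g)$ (the Svarc--Milnor lemma). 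Since $H^1_{\mathrm{dR}}(\hat M)=0$ and $\pi^*\omega$ is exact, pick \emph{some} smooth primitive $\zeta$ with $d\zeta=\pi^*\omega$; the point is that $\zeta$ a priori has no growth control, and I must correct it. The correction is a function $f$ on $\hat M$ with $df=\zeta-\eta$ for the $\eta$ we build; equivalently, I build $\eta$ directly as a "bounded-per-cell" form plus an exact term.

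Next, the core of the argument: I would construct $\eta$ cell by cell over the orbit of $D$. On $D$ set $\eta=\zeta$. To extend to $gD$, I use a word $w_g=s_1\cdots s_n$ in the generators realizing $g$ with $n=d_S(e,g)$, which corresponds to a path $x_0\to s_1x_0\to s_1s_2x_0\to\cdots\to gx_0$ in $\hat M$, each edge being a fixed translate of one of finitely many "bridge" paths $\gamma_s$ connecting $D$ to $sD$. Transporting $\zeta$ along this path and comparing the two primitives on overlaps $gD\cap g'D$ produces, for each such overlap, a \emph{period}: the integral of $\pi^*\omega$ over a $2$-cycle built from the two bridging paths and a relator. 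Here the relator hypothesis (\ref{length2}) enters: each such $2$-cycle is filled by at most $\mathrm{Area}(r_i)$ translates of finitely many fixed $2$-chains, so each individual period is bounded by a universal constant $C_1$. The radial isoperimetric inequality (\ref{pii}) then bounds the \emph{accumulated} period along the path to $gx_0$ by
\[
C_1\,\mathrm{Area}(\text{loop at }g)\le C_1 C\sum_{i=1}^{n}\bigl(d_S(\bar w_g(i),e)+1\bigr)^{p-1}\le C_2\, n\,(n+1)^{p-1}?
\]
— but that is too weak; the radial refinement is exactly what upgrades the naive $L(w)\cdot(\mathrm{diam})^{p-1}$ to the pointwise bound $(d_S(e,g)+1)^{p-1}$, because the discrepancy between the primitive at $gx_0$ and at the previous vertex is controlled by the \emph{single} term $(d_S(\bar w_g(i),e)+1)^{p-1}$ at that step, so telescoping the one-step discrepancies gives $|\zeta - \eta|(gx_0)\lesssim (d_S(e,g)+1)^{p-1}$ with the right exponent. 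I would make this telescoping precise by defining the correction function $f$ inductively along a \emph{geodesic spanning tree} in the Cayley graph (so there is a canonical path to each $g$), with the well-definedness across tree-adjacent-but-not-tree edges guaranteed by the relator filling above.

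Then I would assemble: set $\eta := \zeta - df$ where $f$ is the (smoothed, $G$-equivariantly patched) correction function just described; on $gD$ one has $|\eta|\le |\zeta - df|$, and by construction $f$ differs from a locally bounded function by the cumulative period, so $|\eta|(x)\le C_3(d_S(e,g)+1)^{p-1}\le C_4(d_{\hat M}(x,x_0)+1)^{p-1}$ for $x\in gD$, using the Svarc--Milnor comparison. Smoothness is arranged by a partition of unity subordinate to the cover $\{gU\}$ for a slightly enlarged open $U\supset D$, replacing the a priori piecewise construction; since the patching discrepancies are bounded (they are the periods, hence the same estimate survives), smoothing costs only a bounded additive term and the growth order is unchanged. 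Finally $d\eta = d\zeta = \pi^*\omega$ because $df$ is exact.

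The main obstacle, and the step deserving the most care, is the \emph{telescoping estimate} giving the correct exponent $p-1$ rather than the weaker $p$ coming from (\ref{uii}). The naive bound treats the total combinatorial area times a uniform per-cell period, which only yields $(d_{\hat M}(x,x_0)+1)^{p}$; extracting $(d_{\hat M}(x,x_0)+1)^{p-1}$ forces one to exploit that in (\ref{pii}) the $i$-th relator in the filling is weighted by $(d_S(\bar w(i),e)+1)^{p-1}$ — i.e. relators near $x_0$ are cheap — and to organize the homotopy from the trivial loop to the loop at $g$ so that the area increment contributed while the boundary is at combinatorial distance $\approx k$ is itself $\lesssim k^{p-2}$, so that summing against the $k^{p-1}$ "height" of each such relator and then differencing consecutive vertices leaves exactly one power $k^{p-1}$ at the endpoint. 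I would handle this by an explicit "staircase" null-homotopy of the loop $w_g w_{g'}^{-1}$ for tree-adjacent $g,g'$ (combing), where $g'=gs$, using that the radial inequality applied to that short loop — whose subwords stay within bounded distance of the geodesic to $g$ — gives a per-edge discrepancy of size $(d_S(e,g)+1)^{p-1}$, not $(d_S(e,g)+1)^{p-1}\cdot d_S(e,g)$; the telescoped sum of these along the geodesic to any target then collapses to a single such term, as required. (Whether one needs $G$ to be \emph{combable} for this, or whether the radial inequality already supplies the needed combing implicitly, is the subtle point to pin down; I expect the latter, with the combing being exactly the family of geodesic words used to define the tree.)
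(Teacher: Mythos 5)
There is a genuine gap, and it is located exactly where you suspected: the ``telescoping estimate'' at the heart of your construction does not follow from the radial isoperimetric inequality (\ref{pii}). Your scheme reduces everything to bounding, for each generator edge $(g,gs)$, the flux of $\pi^*\omega$ through a filling of the thin loop $w_g\, s\, w_{gs}^{-1}$ (geodesic out, one edge, geodesic back). The only tool available is (\ref{pii}) applied to that word: its subwords sit at distances $0,1,\dots,n,n,n-1,\dots,0$ from $e$ (with $n=d_S(e,g)$), so the inequality yields $\mathrm{Area}(w_g s w_{gs}^{-1})\leq C\sum_{k\le n}(k+1)^{p-1}\sim n^{p}$, hence a per-edge discrepancy of order $n^{p}$, not $n^{p-1}$. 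Your claim that the discrepancy is actually $(d_S(e,g)+1)^{p-1}$ is not proved, and it is false for a general choice of geodesic combing: take $G=\mathbb{Z}^2$ (CAT(0), so $p=2$ applies), $\omega=dx\wedge dy$ on the torus, and the ``L-shaped'' geodesics $w_g$ (first all horizontal steps, then all vertical). Two tree-adjacent such geodesics to a point at distance $n$ can bound a region of area $\sim n^{2}$ carrying flux $\sim n^{2}=n^{p}$, so the edge defect really is one power too large. A \emph{good} combing exists for $\mathbb{Z}^2$ (straight lines, giving the primitive $\tfrac12(x\,dy-y\,dx)$), but the radial isoperimetric inequality by itself neither supplies such a combing nor tells you where the area of a filling is concentrated; it is only an upper bound on total area. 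So the step you flag as ``the subtle point to pin down'' is not a technicality but the missing idea, and combability (or something like it) is genuinely what your route would require.

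The paper closes this gap by a different mechanism, and it is worth seeing why it avoids your problem. The radial inequality is applied not to auxiliary thin loops but to the word $w$ approximating an arbitrary closed curve $\alpha$ itself: there the weight $(d_S(\bar w(i),e)+1)^{p-1}$ attached to the $i$-th relator matches, term by term, the length element of $\alpha$ in the conformally rescaled metric $\check g=f(Cd(x_0,\cdot))^2 g$ with $f(t)=(1+t)^{p-1}$, giving $\bigl|\int_\alpha\eta\bigr|\leq C\|\alpha\|_{\check g}$ for every \emph{closed} curve (Lemma \ref{l2.3} and the first half of Theorem \ref{t3.1}). Since a pointwise bound on a $1$-form is dual to a bound on integrals over \emph{all} curves, not just closed ones, the paper then extends the functional $c\mapsto\int_c\eta$ from cycles to all $1$-chains by Hahn--Banach with respect to Whitney's flat norm, and invokes \cite[Theorem~5A]{Wh57} to represent the extension by a measurable $1$-form $\tilde\eta$ with $\|\tilde\eta\|_{\check g}\leq C$, i.e.\ $|\tilde\eta|_g(x)\leq C(d(x,x_0)+1)^{p-1}$, which is finally smoothed by a heat-equation argument. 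The Hahn--Banach step is precisely the non-constructive substitute for the explicit combing your proposal needs and cannot produce. If you want to keep a constructive flavor, you would have to either (a) strengthen the hypothesis to the existence of a combing whose adjacent paths bound fillings of weighted area $O(n^{p-1})$, or (b) abandon the cell-by-cell construction and run the duality argument.
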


\noindent Let's explain briefly the key ingredients in the proof of
Theorem \ref{t2} and demonstrate the significance of the radial
isoperimetric inequality (\ref{isop}). Let  $G=\la S|R\ra$ be the
finitely presented covering transformation group. The condition
$[\pi^*\omega]=0$ implies $\pi^*\omega=d\eta$ for some smooth
$1$-form $\eta$ on $\tilde M$. \bd\item[(i)] We show that there
exists a constant $C$ such that
  for any closed curve $\alpha$ on $\tilde{M}$, we can construct a word
  $w\in F_S$ representing the identity and approximating $\alpha$ such that \beq
\left|\int_{\alpha} \eta\right| \leq
C(L(\alpha)+\mathrm{Area}(w)).\eeq

\item[(ii)]  By using the radial isoperimetric inequality
(\ref{isop}), we prove \beq \int_\alpha \eta \leq C \int_\alpha
(d_{\hat{M}}(x,x_0)+1)^{p-1}ds\label{get}\eeq for any closed curve
$\alpha$.

\item[(iii)] $\eta$ can be regarded  as a ``bounded linear functional"
$L$ on the space of closed curves with a suitably defined norm.
Then we use the Hahn-Banach theorem and Whitney's local flat norm
(\cite{Wh57}) to find another bounded linear functional $\tilde L$
whose restriction on the space of closed curves is $L$. Moreover,
$\tilde L$ is represented by a differential form $\tilde \eta$ with
measurable coefficients with $\pi^*\omega=d\tilde \eta$ (in the
current sense) and \beq |\tilde\eta|(x)\leq
C(d_{\hat{M}}(x,x_0)+1)^{p-1},\ \ \qtq {a.e.}
 \eeq

 \item[(iv)] We  use the heat
equation method  to deform   $\tilde\eta$ to a smooth one with the
desired bound in (\ref{hh}).\ed  The radial isoperimetric inequality
(\ref{isop}) is the key ingredient in step (ii), and it could not
work for usual isoperimetric inequality for degree $p>1$.  With the
inequality (\ref{get}) in hand, step (iii) is classical (e.g.
\cite{Gro81, Gro98, Sik01, Sik05}). The smoothing process in step
(iv) is natural in the view point of PDE since $\omega=d\eta$ is
preserved under the specified heat equation (\ref{hhe})
and the  estimate (\ref{hh}) follows from standard apriori estimate of heat equations. \\

 As an application of Theorem \ref{t2}, we obtain
\begin{thm} \label{t3} Let $(M,\omega)$ be a compact K\"ahler manifold and $\pi: \tilde{M}\rightarrow M$ be the universal covering.
Suppose  $[\pi^{\ast} \omega]=0\in H_{\mathrm{dR}}^2(\tilde M)$. If
$\pi_{1}(M)$ satisfies the radial isoperimetric inequality
(\ref{pii}) of degree $p=2$, then $M$ is K\"ahler non-elliptic and
$(-1)^{\frac{\dim_\R M}{2}}\chi(M)\geq 0$.
\end{thm}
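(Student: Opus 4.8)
The plan is to deduce Theorem \ref{t3} directly from Theorem \ref{t2}, applied with $\hat M = \tilde M$ the universal covering, $G = \pi_1(M)$, and $p = 2$. First I would record the hypotheses needed to invoke Theorem \ref{t2}: the group $G = \pi_1(M)$ is finitely presented (this follows from compactness of $M$, since a compact manifold has the homotopy type of a finite CW complex), and it satisfies the radial isoperimetric inequality $(\ref{pii})$ of degree $p = 2$ by assumption. We also need $H^1_{\mathrm{dR}}(\tilde M) = 0$; since $\tilde M$ is simply connected this holds automatically (e.g. by Hurewicz together with the de Rham theorem, or because $H^1_{\mathrm{dR}}(\tilde M) \cong \Hom(\pi_1(\tilde M), \R) = 0$). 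Finally, $[\pi^*\omega] = 0 \in H^2_{\mathrm{dR}}(\tilde M)$ is exactly the standing hypothesis. Thus Theorem \ref{t2} furnishes a smooth $1$-form $\eta$ on $\tilde M$ with $\pi^*\omega = d\eta$ and
\beq |\eta|(x) \leq C\bigl(d_{\tilde M}(x,x_0) + 1\bigr)^{p-1} = C\bigl(d_{\tilde M}(x,x_0) + 1\bigr) \eeq
for all $x \in \tilde M$; that is, $\eta$ has (at most) linear growth. By the definition of K\"ahler non-ellipticity recalled in the introduction (following \cite{JZ00}), this says precisely that $(M,\omega)$ is K\"ahler non-elliptic.

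It then remains to derive the Euler characteristic inequality $(-1)^{\frac{\dim_\R M}{2}}\chi(M) \geq 0$ from K\"ahler non-ellipticity. This is the theorem of Jost--Zuo \cite{JZ00} (and independently Cao--Xavier \cite{CX01}): the strategy, parallel to Gromov's argument in the K\"ahler hyperbolic case \cite{Gro91}, is to show that the $L^2$-cohomology $\mathcal H^k_{(2)}(\tilde M)$ of the universal cover vanishes for $k \neq n := \frac{1}{2}\dim_\R M$. The mechanism is that a linearly growing primitive $\eta$ for $\pi^*\omega$, together with the Lefschetz operator $L = \pi^*\omega \wedge \,\cdot\,$, allows one to run the same commutator/Kähler identities argument as Gromov but now controlling the error terms by the linear — rather than bounded — growth of $\eta$; the key point is that an $L^2$ harmonic $k$-form $\alpha$ with $k < n$ can be written, after a regularization, as $L\beta$ with $\beta$ again $L^2$ (the linear growth of $\eta$ is absorbed because one pairs against compactly supported or suitably decaying forms), forcing $\alpha = 0$; the case $k > n$ follows by the $*$-duality $\alpha \mapsto *\alpha$. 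Granting this vanishing, the $L^2$-index theorem of Atiyah \cite{Atiyah76} for the Galois covering $\tilde M \to M$ gives
\beq \chi(M) = \sum_{k=0}^{2n} (-1)^k \dim_G \mathcal H^k_{(2)}(\tilde M) = (-1)^n \dim_G \mathcal H^n_{(2)}(\tilde M), \eeq
and since the von Neumann dimension $\dim_G \mathcal H^n_{(2)}(\tilde M)$ is non-negative, we conclude $(-1)^n \chi(M) \geq 0$.

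I expect the genuinely new content — and hence the only step requiring real work — to be the invocation and verification of the hypotheses of Theorem \ref{t2}; the passage from a linearly growing primitive to the vanishing of $L^2$-cohomology away from the middle degree, and thence to the Euler characteristic inequality, is the established Jost--Zuo/Cao--Xavier machinery and should be cited rather than reproved. If one wishes to keep the paper self-contained, the main obstacle within that machinery is the regularization argument showing that a closed (or harmonic) $L^2$ form of degree $< n$ lies in the image of the Lefschetz operator applied to an $L^2$ form, uniformly enough that the linear growth of $\eta$ does not destroy square-integrability — this is where the hypothesis $p = 2$ (linear growth of $\eta$, as opposed to $p = 1$ and bounded $\eta$ in the K\"ahler hyperbolic setting) is exactly what is needed, and where one must be careful with the difference between $L^2$ and merely locally $L^2$ primitives. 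Otherwise the proof is a short bookkeeping deduction.
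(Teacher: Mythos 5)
Your proposal is correct and follows essentially the same route as the paper: the paper's proof of Theorem \ref{t3} is exactly the two-line deduction of applying Theorem \ref{t2} with $p=2$ to obtain a linear-growth primitive (hence K\"ahler non-ellipticity) and then citing \cite{JZ00} and \cite{CX01} for the Euler characteristic inequality. Your additional verification of the hypotheses of Theorem \ref{t2} and your sketch of the Jost--Zuo/Cao--Xavier machinery are accurate but not reproduced in the paper, which simply cites those results.
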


\noindent It is a natural question to ask whether  Theorem \ref{t3}
can hold for the usual quadratic isoperimetric inequality.
 It is well-known that  the automatic
groups and $\mathrm{CAT}(0)$ groups are important examples
satisfying the usual quadratic isoperimetric inequality. Recall that
$\mathrm{CAT}(0)$-groups are  groups which can act properly
discontinuously and cocompactly by isometries on  proper geodesic
$\mathrm{CAT}(0)$-metric spaces.
 Typical examples of $\mathrm{CAT}(0)$-groups are fundamental groups of compact manifolds with non-positive sectional curvature.
 Automatic groups   are finitely generated  groups  whose operations are governed by automata. For instances,  hyperbolic groups and mapping class groups are automatic (\cite{Mos95}).  These  groups  have
   been studied extensively in geometric group theory (e.g.
   \cite{CE92}).
 We prove in  Theorem \ref{aq} and Theorem \ref{bq} that
$\mathrm{CAT}(0)$ groups and automatic groups do satisfy the radial
isoperimetric inequality  (\ref{pii}) for $p=2$.  Hence, we obtain
the following result, which also gives an answer to the
aforementioned question proposed by Jost-Zuo:

\begin{thm} \label{t1} Let $(M,\omega)$ be a compact K\"ahler manifold and $\pi: \tilde{M}\rightarrow M$ be the universal covering.
Suppose  $[\pi^{\ast} \omega]=0\in H_{\mathrm{dR}}^2(\tilde M)$. If
$\pi_{1}(M)$ is $\mathrm{CAT}(0)$ or automatic, then $M$ is K\"ahler
non-elliptic and the Euler characteristic $(-1)^{\frac{\dim_\R
M}{2}}\chi(M)\geq 0$.
\end{thm}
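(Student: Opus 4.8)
The plan is to deduce Theorem~\ref{t1} from Theorem~\ref{t3}: it suffices to show that a $\mathrm{CAT}(0)$ group and an automatic group each satisfies the radial isoperimetric inequality \eqref{pii} of degree $p=2$, after which Theorem~\ref{t3} applies verbatim and yields both that $M$ is K\"ahler non-elliptic and that $(-1)^{\frac{\dim_\R M}{2}}\chi(M)\ge 0$. (Note that the hypothesis $H^1_{\mathrm{dR}}(\tilde M)=0$ appearing in Theorem~\ref{t2} is automatic here, since $\tilde M$ is simply connected.) Because a general $\mathrm{CAT}(0)$ group is not known to be automatic, the two classes must be treated separately, but in both cases the remaining task is the same purely combinatorial statement: for $G=\la S\mid R\ra$ in the class and every word $w=s_1\cdots s_n$ with $\bar w=e$, one must produce a van Kampen diagram for $w$ — equivalently, an expression of the form \eqref{length2} — with at most $C\sum_{i=1}^n(d_S(\bar w(i),e)+1)$ cells.

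For an automatic group I would use a combing. An automatic structure assigns to each $g\in G$ a normal form $\sigma_g$ over $S$ with $\overline{\sigma_g}=g$ such that the $\sigma_g$ are uniform quasigeodesics, so $L(\sigma_g)\le\lambda\,d_S(g,e)+\lambda$, and the synchronous $k$-fellow-traveller property holds: if $d_S(g,h)\le1$ then $d_S(\sigma_g(t),\sigma_h(t))\le k$ for all $t$, where $\sigma_g(t)$ denotes the length-$t$ prefix of $\sigma_g$, extended by $g$ once $t\ge L(\sigma_g)$. After adjoining to $R$ all relators of length $\le 2k+2$ (harmless, as this changes neither the group nor the validity of \eqref{pii}, only the constant), one fills $w$ by concatenating, for $i=0,\dots,n-1$, the thin quadrilateral with boundary word $\sigma_i\,s_{i+1}\,\sigma_{i+1}^{-1}$, where $\sigma_i:=\sigma_{\bar w(i)}$. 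By the fellow-traveller property this quadrilateral is a strip of width $\le k$, and it has length $\le\max(L(\sigma_i),L(\sigma_{i+1}))\le C(d_S(\bar w(i),e)+1)$; subdividing it rung by rung tessellates it by at most $C(d_S(\bar w(i),e)+1)$ cells, each a relator of the enlarged presentation. Summing over $i$ yields \eqref{pii} with $p=2$; this is Theorem~\ref{bq}.

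For a $\mathrm{CAT}(0)$ group $G$ acting properly, cocompactly and isometrically on a proper $\mathrm{CAT}(0)$ space $X$, I would replace the algebraic combing by the geodesic combing based at a point $x_0\in X$: to $g\in G$ assign the geodesic $[x_0,gx_0]$ and sample it at unit speed to obtain a word $\sigma_g$ with $L(\sigma_g)$ within a bounded linear error of $d(x_0,gx_0)$, hence of $d_S(g,e)$ by the \v{S}varc--Milnor lemma. Convexity of the metric on a $\mathrm{CAT}(0)$ space supplies the fellow-traveller bound with a uniform constant: if $d_S(g,h)\le1$ then $d(gx_0,hx_0)\le D$ for a fixed $D$, and a comparison-triangle estimate shows that the points at equal arclength on $[x_0,gx_0]$ and $[x_0,hx_0]$ remain within a bounded distance of each other; transporting this bound to $S$-distance by cocompactness, the sampled words $\sigma_g$ and $\sigma_h$ fellow-travel with a uniform constant. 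The filling argument of the previous paragraph then applies unchanged and gives \eqref{pii} with $p=2$; this is Theorem~\ref{aq}.

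The step I expect to be the main obstacle is the \emph{radial} bookkeeping in the filling: one must ensure that the cells used to fill the $i$-th quadrilateral are charged only to the prefix $\bar w(i)$, so that the total area is bounded by $\sum_i(d_S(\bar w(i),e)+1)$ and not merely by the cruder $L(w)(\mathrm{diam}(w)+1)$, which is of order $L(w)^2$ in the worst case. This refinement is exactly what separates the radial inequality \eqref{pii} from the ordinary quadratic Dehn function, and it is the only place where the \emph{local} fellow-traveller constant, rather than a global diameter bound, is used; the quasigeodesic property of the combing, which keeps $\sigma_{\bar w(i)}$ inside a ball of radius $\asymp d_S(\bar w(i),e)$ about $e$, is what makes this accounting tight. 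Once \eqref{pii} with $p=2$ has been established, nothing further is needed: the construction of a $1$-form $\eta$ with $\pi^*\omega=d\eta$ of linear growth, the Hahn--Banach and Whitney flat-norm argument, the heat-equation smoothing, and the resulting topological conclusions are all provided by Theorem~\ref{t2} and Theorem~\ref{t3}.
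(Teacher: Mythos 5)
Your proposal is correct and follows essentially the same route as the paper: the paper likewise reduces Theorem~\ref{t1} to Theorem~\ref{t3} by proving (Theorems~\ref{aq} and~\ref{bq}) that automatic and $\mathrm{CAT}(0)$ groups satisfy \eqref{pii} with $p=2$, in each case filling the loop by thin quadrilaterals between consecutive combing lines (the automatic normal forms $p_i$ of the prefixes $\bar w(i)$, resp.\ the sampled $\mathrm{CAT}(0)$ geodesics $[p,\bar w(i)(p)]$), with the number of cells in the $i$-th quadrilateral controlled by the combing length, which is linear in $d_S(\bar w(i),e)$ by the bounded length-difference property of the normal forms, resp.\ by convexity of the $\mathrm{CAT}(0)$ metric. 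Your ``radial bookkeeping'' point is exactly the paper's accounting step, so no further argument is needed.
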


\noindent One can also deduce the following result  easily from the
special case for $p=1$ in Theorem \ref{t2}:

\bcorollary\label{ks}  Let $(M,\omega)$ be a compact K\"ahler
manifold and $\pi: \tilde{M}\rightarrow M$ be the universal
covering. Suppose $[\pi^{\ast} \omega]=0\in H_{\mathrm{dR}}^2(\tilde
M)$.
  If $\pi_{1}(M)$ is a hyperbolic group, then $M$ is K\"ahler  hyperbolic
 and the Euler characteristic
 $(-1)^{\frac{\dim_\R M}{2}}\chi(M)>0$.

\ecorollary

\bremark  Note that the condition $[\pi^{\ast} \omega]=0$ is
equivalent to the fact that $\pi^*\omega=d\eta$ for some $1$-form
$\eta$, which holds trivially on K\"ahler hyperbolic or K\"ahler
non-elliptic manifolds.  It is not hard to see that this condition
 holds if $\pi_2(M)$ is torsion. In particular, one gets the
following fact pointed out by Gromov(\cite[p.~266]{Gro91}): if
$\pi_1(M)$ is hyperbolic and $\pi_2(M)=0$, then $M$ is K\"ahler
hyperbolic. \eremark

\bremark When $(M,\omega)$ is a symplectic manifold, one can obtain
the corresponding hyperbolicity or non-ellipticity from Theorem
\ref{t2} for $p=1$ or $2$ as in the K\"ahler case. See
\cite[Corollary~1.12]{Ked09} for the former case.

\eremark

\bremark It worths to point out that, every finitely generated
nilpotent group of class $p$ satisfies the usual isoperimetric
inequality of degree $p+1$ (\cite{GHY03}). On the other hand,
 Cheeger-Gromov proved in
\cite{CG86} that if the fundamental group $\pi_1(M)$ contains an
infinite normal amenable subgroup, then the Euler characteristic
$\chi(M)=0$. Hence it would be very interesting to ask: whether
$\pi_1(M)$ satisfies the radial isoperimetric inequality for $p=2$
if $\pi_1(M)$ does not  contain an infinite normal amenable
subgroup. If this were true, then any compact K\"ahler aspherical
manifold $M$ has non-negative signed Euler characteristic, i.e.
$(-1)^{\frac{\dim_\R M}{2}}
   \chi(M)\geq 0$. This is a special case of more general conjecture proposed by I.M. Singer :

    \emph{Conjecture: any compact  aspherical
manifold $M$ has non-negative signed Euler characteristic.}

\noindent When $X$ is an algebraic manifold, there is an algebraic
surface $Y$ such that $\pi_1(X)=\pi_1(Y)$ by  Lefschetz hyperplane
theorem. It is not hard to see that, in this case we only need to
consider the question for algebraic surfaces of general type thanks
to the Enriques-Kodaira classification (e.g. \cite[Chapter
V]{BHPV03}). \eremark

 In some sense,   the present paper is to build up one  relationship between
  the geometry of the fundamental groups $\pi_1(X)$ and the geometry$/$topology of the space $X$ itself.  There are many important works when the fundamental group $\pi_1(X)$ admits a finite dimensional representation with infinite image. We refer to \cite{Gro91, Mok92, Zuo94, ABCKT96, Eys99, Zuo99, JZ00,  BKT13,
 Kli13} and the references therein.\\

  The paper is organized as follows. In Section
\ref{hyperbolic},  to exhibit the key ingredients in the proof  of
Theorem \ref{t2}, we show a special case when the covering
transformation group $G$ is hyperbolic.  In Section \ref{general},
we deal with more general $G$ in Theorem \ref{t3.1} and establish
Theorem \ref{t2} and Theorem \ref{t3}.
 In Section \ref{ex}, we show that
$\mathrm{CAT}(0)$ groups and automatic groups satisfy the radial
isoperimetric inequality  (\ref{pii}) for $p=2$ and prove Theorem
\ref{t1}.

\vskip 1\baselineskip

\textbf{Acknowledgements.} The first author wishes to thank Hao
Liang for helpful discussions, and he was partially supported by
grants NSFC11521101, 11025107. The second author was partially
supported by China's Recruitment Program of Global Experts and  NSFC
11688101.

\section{Hyperbolic fundamental groups}\label{hyperbolic}

 Let $G$ be a finitely generated group  with a finite
  set $S$ generating $G$, where $S=S^{-1}$. The
\emph{Cayley graph} $\Gamma(G, S)$ of $G$ with respect to $S$ is a
graph satisfying \bd
\item The vertices of $\Gamma(G, S)$ are the elements of $G$;  \item
Two vertices $x,y \in G$ are joined by an edge if and only if there
exists an element $s\in S$ such that $x = ys$.\ed We define a metric
$d_{\Gamma(G,S)}$ on a Cayley graph $\Gamma(G,S)$ by letting the
length of every edge be $1$ and defining the distance between two
points to be the minimum length of arcs joining them. This metric
$d_{\Gamma(G,S)}$ is actually the word metric $d_S$ on $G$ defined
in (\ref{wm}). We say the group $G$ is hyperbolic if $(G,d_S)$ is a
hyperbolic metric space in the sense of Gromov(\cite{Gro87}, or
\cite[Proposition~2.6]{Oh}), i.e. there is a positive number
$\delta>0$ such that for any four points $x_1,x_2,x_3,x_4\in G$, we
have
$$
d_S(x_1,x_2)+d_S(x_3,x_4) \leq \max\{ d_S(x_1,x_3)+d_S(x_2,x_4);
d_S(x_1,x_4)+d_S(x_2,x_3)\} +2\delta.
$$
 One can check that the hyperbolicity of $G$ is independent of the
finite generating set $S$.  It can be shown that if the group $(G,
d_S)$ is hyperbolic, then $G$ is finitely presented, i.e.  $G=\la
S|R\ra$ where $R\subset F_S$ is a finite relator set. It is
well-known (\cite{Gro87}) that $G$ is hyperbolic if and only if it
satisfies the linear isoperimetric inequality:
\begin{equation} \label{li}
\text{Area}(w)\leq C L(w),
\end{equation}
for any word  $w$ in the free group $F_S$  representing the identity
$e$ in $G$. For more comprehensive introductions, we refer to
\cite{Gro87, Oh} and the references therein.

\btheorem\label{t2.1}  Let $M$ be a compact manifold and $\pi:
\tilde{M}\rightarrow M$ be a Galois covering such that
 $H^1_{\mathrm{dR}}(\tilde{M})=0$.
   Let $\omega$ be  a smooth closed $2$-form on $M$ such that $[\pi^{\ast}\omega]=0$ in
   $H^{2}_{\mathrm{dR}}(\tilde{M})$. If the covering transformation group $G$ is hyperbolic,
    then there exists a  bounded smooth $1$-form $\tilde{\eta}$ on $\tilde{M}$ such that $\pi^{\ast} \omega=d\tilde{\eta}$.
   \etheorem

  Fix a smooth reference metric on $M$ and endow $\tilde M$ with the induced metric. Since  $[\pi^{\ast}\omega]=0$ in $H^{2}_{\mathrm{dR}}(\tilde{M})$,  there is a smooth $1$-form ${\eta}$ on $\tilde{M}$
  such that $\pi^{\ast}\omega=d{\eta}$.  If ${\eta}$ is bounded, we are done. If it is not bounded,
  we shall show that there exists another bounded smooth $1$-form $\tilde{\eta}$ on $\tilde{M}$
   such that $\pi^{\ast}\omega=d\tilde{\eta}=d{\eta}$. The proof of Theorem \ref{t2.1} is
   divided into the following steps. \\

 \blemma\label{l2.2} For any $L>0$, there is a constant $D=D(L)$ depending on $L$ such that for
 any smooth closed curve $\alpha$ in $\tilde{M}$ with length $L(\alpha)\leq L$, we have
   \begin{equation}
\left|\int_{\alpha}{\eta}\right|\leq D . \label{estimate1}
 \end{equation} \elemma \bproof
 Let $\Omega\subset \tilde{M}$ be a fundamental domain of the group of deck transformations $G$.
 If $\alpha$ passes through $\Omega$, then the inequality (\ref{estimate1}) can be derived from  the smoothness of $\eta$.
 If $\alpha$ does not pass though $\Omega$, there exists some $g\in G$ such that $g^{-1}(\alpha)$ passes through $\Omega$.
 Hence,   $$\left|\int_{g^{-1}(\alpha)}\eta\right| \leq D$$ as in the previous case. Since $g^{\ast}(\omega)=\omega$,
  we know $$d(g^{\ast}\eta-\eta)=0.$$ On the other hand, since $H_{\mathrm{dR}}^1(\tilde{M})=0$, there exists a smooth function $f$
  such that $g^{\ast}\eta-\eta=df$.  Now (\ref{estimate1}) follows from
 $$
\left |\int_{\alpha}\eta\right|=
\left|\int_{g^{-1}(\alpha)}g^{\ast}\eta\right|=\left|\int_{g^{-1}(\alpha)}\eta+df\right|=\left|\int_{g^{-1}(\alpha)}\eta\right|\leq
D
 $$
 since $\alpha$ is closed and  $\int_{g^{-1}(\alpha)} df=0$.\eproof

\blemma\label{l2.3} There exists a constant $C>0$ such that for any
smooth closed curve $\alpha$ on $\tilde{M}$, we have
 \beq
\left |\int_{\alpha}{\eta}\right|\leq C\cdot
L(\alpha).\label{estimate2}
 \eeq

\elemma

\noindent Note that in the sequel the constant $C$ can vary from
line to line.

\bproof By perturbations, we may assume $\alpha: [0, L]\rightarrow
\tilde{M}$ is parameterized by the arclength, where $L=L(\alpha)$
and $|\alpha'(s)|\equiv 1$.
 Without loss of generality, we assume $L\geq 1$.
  Fix a base point  $x_0$ in the fundamental
domain $\Omega$. Let $[L]$ be the integer part of $L$. For each
$i=0,1,2\cdots, [L]$, we  choose $g_i\in G$ such that $\alpha(i)\in
g_i(\Omega)$. If $L$ is an integer, we choose $g_{L}=g_0$.
Otherwise, we set $g_{[L]+1}=g_0$. For simplicity, we set $N$ to be
$L$ if $L$ is an integer, otherwise $N$ is $[L]+1$. Connecting
$g_i(x_0)$ and $g_{i+1}(x_0)$ with a geodesic segment
$\tilde{\gamma}_i$, and $\tilde{\gamma}=\tilde{\gamma}_0\cup
\tilde{\gamma}_1 \cup \cdots \cup \tilde{\gamma}_{N-1} $ is a closed
curve of length
$$L(\tilde{\gamma}) \leq C L(\alpha)$$ for some $C$ independent of the curve
$\alpha$. We claim \begin{equation} \label{d1}
 \left|\int_{\alpha} \eta-\int_{\tilde{\gamma}} \eta\right| \leq C L(\alpha).
 \end{equation}
 Connecting $\alpha(i)$ and $g_i(x_0)$ with a geodesic segment $\beta_i$,
  we get $N$ closed curves $\hat{\gamma}_i=\alpha\mid_{[i,i+1]}\cup \beta_{i+1}\cup \tilde{\gamma}_i^{-1}\cup \beta_i^{-1}$,
    $i=0,1,\cdots, N-1$. Hence,
 \begin{equation}
  \int_{\alpha} \eta-\int_{\tilde{\gamma}} \eta =\sum_{i=0}^{N-1}\int_{\hat{\gamma}_i}
  \eta. \label{d2}
 \end{equation}
 By our construction, there is a constant $C$ such that $L(\hat{\gamma_i}) \leq C$.
  The claim (\ref{d1}) follows from the estimate (\ref{estimate1})   and (\ref{d2}).

 Let $G=\la S|R\ra$, where the generator set $S=S^{-1}$ and relator set $R$ are all finite.
 Let $\Gamma(G,S)$ be the Cayley graph of $G$.
  Then it is a standard fact that $\Gamma(G,S)$ is quasi-isometric to $\tilde{M}$, and this implies that there are  constants  $C_1,C_2>0$ such that
  \begin{equation}
  C^{-1}_1d_{\tilde{M}}(a(x_0),b(x_0))-C_2\leq d_{\Gamma(G,S)}(a,b)\leq C_1 d_{\tilde{M}}(a(x_0),b(x_0))+C_2
  \end{equation}
for any $a,b\in G$. From this,  we know
$d_{\Gamma(G,S)}(g_i,g_{i+1})$ are uniformly bounded by some
constant $C$. Hence,  for each $i$, $g_{i+1}=g_i s^{i}_1\cdots
s^{i}_{n_i}$ where $s^{i}_j\in S$, $n_i\leq C$. Connecting
$g_i(x_0)$, $(g_is^i_1)(x_0),$ $\cdots,$ $(g_is^{i}_1\cdots
s^{i}_{n_i})(x_0)$ with geodesic segments $\gamma^i_1$, $\cdots$,
$\gamma^i_{n_i}$ successively. It is clear the closed curve
$\tilde{\gamma}_i\cup(\gamma^i_1\cup \cdots\cup \gamma^i_{n_i}
)^{-1}$ has uniformly bounded length. By similar arguments as in
(\ref{d1}) and (\ref{d2}),  we conclude that \beq
  \left|\int_{\alpha} \eta-\int_{{\gamma}} \eta\right|  \leq C L, \label{d3}
 \eeq
where $\gamma=\left(\gamma^0_1\cup \cdots\cup \gamma^0_{n_0}\right)
\cup \cdots \cup \left(\gamma^{N-1}_1\cup \cdots\cup
\gamma^{N-1}_{n_{N-1}}\right)$. Consider  the  word $$w=s^0_1\cdots
s^{0}_{n_0} \cdots s^{N-1}_1\cdots s^{N-1}_{n_{N-1}}$$ representing
the identity in  $G$.  It has length \beq
L(w)=n_0+\cdots+n_{N-1}\leq CL. \label{wl}\eeq Let
$k_0=\mathrm{Area}(w)$ be the combinatorial area of $w$ and $w_0$ be
a word equal to $w$ as elements of $F_S$ with the form
 \begin{equation}\label{isow} w_0 =\prod_{i=1}^{ k_0} v_i r_iv_i^{-1} \end{equation}
 for reduced words $v_1,\cdots, v_{ k_0}$ on $S$ and $r_i$ or $r_i^{-1}\in
R$, $i=1,\cdots,  k_0$.   Note that  $w_0$ is obtained from $w$ by
inserting several subwords of the form $aa^{-1}$, where $a$ is a
word on $S$. According to  the previous construction  of the curve
$\gamma$ from  the word $w$, one can construct a curve $\gamma_0$
from the word $w_0$ by inserting several loops into $\gamma$ of the
form $\delta\delta^{-1}$ corresponding to the inserted subwords
$aa^{-1}$.  It is clear that
$$
\int_{{\gamma_0}} \eta= \int_{{\gamma}} \eta.
$$ Note that  each subword $v_i r_iv_i^{-1}$ in $w_0$  represents the identity in the group $G$.
In the correspondence  of $\gamma_0$ and the word $w_0$,  the
subword $v_i r_iv_i^{-1}$ represents a closed curve $\delta_i$ based
on $g_0(x_0)$, and $r_i$ represents a closed curve $\epsilon_i$
based on the end points of a curve $\eta_i$ which corresponds to
$v_i$. Then $\delta_i= \eta_i\cup \epsilon_i \cup \eta_i^{-1} $ and

$$
\int_{\delta_i} \eta=\int_{\epsilon_i} \eta.
$$
Since the relator set $R$ is finite, by Lemma \ref{l2.2}, there is a
constant $C>0$ such that $|\int_{\epsilon_i}\eta|\leq C$  and \beq
\left|\int_{\gamma} \eta\right|=\left|\sum_{i=1}^{k_0}
\int_{\delta_i} \eta\right| \leq C \mathrm{Area}(w). \label{d4}\eeq
Therefore, by (\ref{d3}) and (\ref{d4}), we obtain \beq
\left|\int_{\alpha} \eta\right| \leq
C(L(\alpha)+\mathrm{Area}(w)).\label{estimate21} \eeq From the
linear isoperimetric inequality (\ref{li}), there is a constant
$C_1>0$ such that \beq \mathrm{Area}(w)\leq C_1\cdot L(w) \leq
\tilde C\cdot L(\alpha), \eeq where the second inequality follows
from (\ref{wl}). By (\ref{estimate21}), we obtain the estimate
(\ref{estimate2}).\eproof

Note that, from (\ref{estimate2}), we can not deduce  the norm of
$\eta$   is bounded by $C$, since (\ref{estimate2}) holds only for
\emph{closed curves}. If we regard $\eta$ as a linear functional on
the space of general curves, $\eta$ is not necessarily bounded. We
shall use the Hahn-Banach theorem to find another bounded  $\tilde
\eta$ whose restriction on the space of closed curves is  $\eta$.
The following result is inspired by the classical results
\cite[Proposition~4.35]{Gro98} and \cite[Section~4.10]{Fed74} which
are based on  \cite{Wh57}.

\blemma\label{l2.4} There exists a $1$-form $\tilde \eta$ on $\tilde
M$ with bounded measurable coefficients such that
$\pi^*\omega=d\tilde \eta$ in the sense of currents. \elemma

\bproof We follow the framework of \cite[Chapter V and Theorem
~5A]{Wh57}. By the classical triangulation theorem (e.g.
\cite[Chapter~IV]{Wh57}), there is a simplicial complex $K$ and a
homomorphism $T:|K|\rightarrow \tilde{M}$ with the following
property. For each $n$-simplex $\sigma$ of $K$ ($n$ is the real
dimension of $\tilde{M}$), there is a coordinate system $\chi$ in
$M$ such that $\chi^{-1}$ is defined in a neighborhood of
$T(\sigma)$ in $\tilde{M}$ and $\chi^{-1} T$ is affine in $\sigma$.
Let $K^{(1)}$ be the first barycentric subdivision of $K$, and
$K^{(2)}$ be the second, $\cdots$.   For each $p>0$, consider the
chain groups (over $\mathbb{R}$), $C_{p}(K)$, $C_{p}(K^{(1)})$,
$C_{p}(K^{(2)})$, and etc. We regard a simplex $\sigma$ as the sum
of its subdivisions. Then there are natural inclusions
$C_{p}(K)\subset C_{p}(K^{(1)})\subset \cdots $. Denote
$C_p(K^{\infty})=\cup C_{p}(K^{(i)})$, and equip $C_p(K^{\infty})$ a
norm (mass) in the following manner: if $c=\sum a_i \sigma_i \in
C_{p}(K^{(j)})$ is a $p$-chain such that $\sigma_i$ are mutually
disjoint   (i.e. the interiors of $\sigma_i$ are mutually disjoint),
then
$$
\| c\|=\sum_{i}|a_i| \times \mathrm{Area}_{\tilde{M}}(T(\sigma_i)).
$$
Now $(C_p(K^{\infty}), \|\cdot \|)$ is a normed vector space. It can
be shown that for $c\in C_p(K^{\infty})$,
$$
\| c\| =\sup \left\{ \left|\int_{c} \theta \right|:  \theta \text{
is a $p$-form with} \  \|\theta\|\leq 1 \right\}.
$$
The norm (comass) of a differential form  $\theta$ of degree $p$ is
defined as
\begin{equation}
\|\theta\|=\sup_{\tilde M}\sup |\theta(e_1,\cdots,e_p)|
\end{equation}
where the inside supremum is taken  over all the orthonormal frames
$(e_1,\cdots, e_p,\cdots , e_n)$ in $T\tilde{M}$. See
\cite[p.~245--p.~246]{Gro98} for more details.  To invoke the main
result in \cite[Theorem~5A]{Wh57}, we need to recall the flat norm
$|\cdot|^{\flat}$ defined over $C_1(K^{\infty})$ by \beq
|c|^{\flat}=\inf \{\|c-\partial D\|+\|D\|\} \label{flat1} \eeq where
$D$ ranges over all $2$-chains. For any differential $1$-form
$\theta$, one can show \beq
|\theta|^{\flat}=\sup\left\{\left|\int_{c}\theta\right|:
|c|^{\flat}\leq 1\right\}=\max\left\{ \|\theta\|,
\|d\theta\|\right\}.\label{flat2} \eeq Let $Z_1(K^{\infty})= \cup
Z_1(K^{i})\subset C_1(K^{\infty})$ be the subspace of  all closed
$1$-chains (cycles).  We define a linear functional $\tilde{L}$ on
$Z_1(K^{\infty})$ in the following way: \beq \tilde{L}(c)=\int_{c}
\eta.\label{lf} \eeq According to Lemma \ref{l2.3}, we have
$|\tilde{L}(c)| \leq C\|c\|$ for any $c\in Z_1(K^{\infty})$.
Moreover,  by using
$$\int_{c}\eta=\int_{c-\partial D} \eta+\int_{D} \pi^*\omega, $$ we get
\beq |\tilde{L}(c)|\leq C (\|c-\partial D\|+\|D\|) \eeq for any
$2$-chain $D$ since $\omega$ is bounded.
 We conclude that  $$|\tilde{L}(c)| \leq C|c|^{\flat}$$ for any $c\in Z_1(K^{\infty})$.

By Hahn-Banach theorem, $\tilde{L}$ can be extended to a linear
functional  $L$ on $C_1(K^{\infty})$ satisfying \beq |L|^{\flat}=
|\tilde{L}|^{\flat}_{Z_1(K^{\infty})} \leq C.\eeq Since the flat
norm of the cochain $L$ is bounded, by \cite[Theorem~5A]{Wh57}, we
conclude that $L$ and $dL$ can be represented by differential forms
$\tilde{\eta}$ and $\tilde{\omega}$ with measurable coefficients,
i.e.
$$L(c)=\int_c \tilde \eta,\ \ \ (dL)(D)=\int_D \tilde \omega.$$
 According to the explanation in \cite[p.~260--p.~261]{Wh57}, the integration
$\int_c \tilde \eta$ is well-defined since $\tilde \eta$ is
measurable with respect to the $1$-dimensional  measure on $c$. For
the same reason, $\int_D \tilde \omega$ is also well-defined.
 Actually, \cite[Theorem~5A]{Wh57} is stated in a domain of the
Euclidean space, and we can apply the theorem  on each coordinate
chart and find a set of bounded differential forms coinciding on the
intersections of these coordinate charts, which give us global
bounded forms $\tilde{\eta}$ and $\tilde{\omega}$.  Note that
$$
\int_{D} \pi^*\omega=\int_{\partial D} \eta=\int_{\partial D}
\tilde{\eta}=L(\partial D)=(dL)(D)=\int_{D} \tilde{\omega}
$$
holds for any 2-chain $D$, and  it follows that
$\tilde{\omega}=\pi^*\omega$. Hence, we find a bounded $1$-form
$\tilde{\eta}$ such that $ \pi^*\omega=d\tilde{\eta}$ in the sense
of currents.\eproof

To complete the proof of Theorem \ref{t2.1},  we shall deform
$\tilde{\eta}$ in Lemma  \ref{l2.4} to a bounded smooth one so that
$\pi^*\omega=d\tilde \eta$ still holds. Indeed, such a form can be
obtained by using the heat equation method. Consider the following
heat equation for differential forms on $\tilde{M}$
\beq\begin{cases}
 \left(\frac{\partial}{\partial t}-\triangle\right) \tilde{\eta}(t)  = \pi^* d^{\ast}\omega,\\
 \tilde{\eta}(0) =\tilde{\eta}
\end{cases}\label{he}
\eeq
 where $-\triangle=(dd^{\ast}+d^{\ast} d)$ is the Hodge
Laplacian operator. Since $\tilde{M}$ has bounded geometry,  the
initial data $\tilde{\eta}$ is bounded, and $\pi^*d^{\ast} \omega$
is a bounded smooth form on $\tilde M$, it is well-known that the
equation (\ref{he}) admits a unique bounded short time solution
$\tilde{\eta}(t)$, $t\in [0, T_0]$, $T_0>0$. Moreover, $\tilde
\eta(t)$ is smooth when $t>0$. This can be done by solving the
equation on a sequence of bounded domains exhausting $ \tilde{M}$
and obtaining a priori estimates via applying the maximum principle.
Actually, the solution of (\ref{he})  exists for all time
$[0,\infty)$, but we do not need this. We claim
$d\tilde{\eta}(t)\equiv \omega$ for all $t\in [0,T_0]$  and so we
can take $\tilde{\eta}(T_0)$ as the desired bounded smooth form
$\tilde\eta$ in Theorem \ref{t2.1}.

By applying the standard Bernstein trick and the maximum principle
(localized version), one can show $|\nabla \tilde{\eta}(t)| \leq
\frac{C}{\sqrt{t}}$ for all $t\in (0,T_0]$. Taking exterior
differential of (\ref{he}), we get \beq\begin{cases}
 \left(\frac{\partial}{\partial t}-\triangle\right) \bar{\omega}(t)   =0,\\
  \bar{\omega}(t) \mid_{t=0} =0
\end{cases}\label{he1}
\eeq where $\bar{\omega}(t):= d\tilde{\eta}(t)-\pi^*\omega$. Let
$\hat{\omega}(t)=\int_{0}^{t} \bar{\omega}(s)ds$, then by
straightforward computations, we obtain \beq\begin{cases}
 \left(\frac{\partial}{\partial t}-\triangle\right) \hat{\omega}(t)   =0,\\
  \hat{\omega}(t) \mid_{t=0} =0.
\end{cases}\label{he2}\eeq
A simple calculation shows the estimate $|\nabla \tilde{\eta}(t)|
\leq \frac{C}{\sqrt{t}}$ implies that $\hat{\omega}(t) $ is
uniformly bounded on $\tilde{M}\times [0,T_0]$. By standard Bochner
formula, we obtain
$$ \left(\frac{\partial}{\partial t}-\triangle\right)\left(e^{-Ct}|\hat\omega|^2\right)\leq 0$$
for some large $C$ depending on the curvature bound of $\tilde M$.
 Since the maximum principle of the heat equation
on such manifolds  holds when the subsolution grows  not faster than
$Ce^{Cd(x,x_0)^2}$ (\cite[Theorem~15.2]{Li} or
\cite[Lemma~2.5]{Che10}), we obtain $\hat{\omega}(t)\equiv 0$, i.e.,
$d\tilde{\eta}(t) \equiv \pi^*\omega$. The proof of Theorem
\ref{t2.1} is completed. \qed

\vskip 2\baselineskip

 \section{General fundamental groups}\label{general}

 In this section, we  deal with a Galois covering $\pi: \tilde{M} \rightarrow M$ of  a  compact
 manifold $M$ with a finitely presented covering transformation   group $G=\la S|R\ra$.

\begin{thm}\label{t3.1}  Let $M$ be a compact manifold, $\pi: \tilde{M} \rightarrow M$ a Galois covering
with $H^{1}_{\mathrm{dR}}(\tilde{M})=0$. Let $\omega$ be  a smooth
closed $2$-form on $M$ such that  $[\pi^{\ast}\omega]=0$ in
$H^{2}_{\mathrm{dR}}(\tilde{M})$. Let  $f: \mathbb{R}_{+}\rightarrow
[1,+\infty)$  be  a  non-decreasing function. Suppose the covering
transformation group $G$ is  finitely presented and satisfies the
following radial isoperimetric inequality: for any word $w=s_1\cdots
s_m$ in the free group $F_S$ representing the identity $e$ in $G$,
we have
 \begin{equation} \label{if}
 \mathrm{Area}(w)\leq C\sum_{i=1}^mf(d_{S}(\bar{w}(i), e))
 \end{equation} where $w(i)=s_1\cdots s_i$
 and $\bar{w}(i)$ is its image in $G$. Then there exists a $1$-form
$\tilde{\eta}$ on $\tilde{M}$ with measurable coefficients
satisfying
\begin{equation} \label{if1}
\begin{cases}
 \pi^{\ast} \omega =d\tilde{\eta}\\
   |\tilde{\eta}|(x)  \leq C f(Cd(x,x_0))
 \end{cases}
 \end{equation}
 for some constant $C>0$ where $x_0$ is a fixed point in $\tilde M$.  Moreover,   if $f(\lambda)\leq Ce^{C\lambda^2}$, one  can  choose $\tilde \eta$ to be smooth.
 \end{thm}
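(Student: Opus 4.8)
The plan is to follow the four-step scheme of the proof of Theorem~\ref{t2.1}, upgrading every ``bounded'' estimate there to a \emph{weighted} one, with weight $f(Cd_{\tilde M}(\cdot,x_0))$ in place of a constant. Fix the reference metric on $M$, the base point $x_0\in\tilde M$, and a smooth $1$-form $\eta$ with $\pi^*\omega=d\eta$ (which exists since $[\pi^*\omega]=0$ in $H^2_{\mathrm{dR}}(\tilde M)$). \textbf{Step 1} is Lemma~\ref{l2.2} verbatim: its proof uses only that $M$ is compact, that $g^*\omega=\omega$, and that $H^1_{\mathrm{dR}}(\tilde M)=0$, so for every $L>0$ there is $D(L)$ with $|\int_\alpha\eta|\le D(L)$ for all smooth closed curves $\alpha$ of length $\le L$.

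\textbf{Step 2} is the heart of the argument and the only place where the \emph{radial} form of (\ref{if}) is used essentially. I claim $|\int_\alpha\eta|\le C\int_\alpha f(Cd_{\tilde M}(x,x_0))\,ds$ for every smooth closed curve $\alpha$ on $\tilde M$. To prove it I would first cyclically reparametrize $\alpha$ by arclength so that $\alpha(0)$ is a point of $\alpha$ closest to $x_0$, so that $d_{\tilde M}(\alpha(s),\alpha(0))\le 2d_{\tilde M}(\alpha(s),x_0)$ for all $s$; then run the word construction of Lemma~\ref{l2.3} verbatim, obtaining a word $w$ representing the identity in $G$, with blocks of uniformly bounded length and $L(w)\le CL(\alpha)$, so that $|\int_\alpha\eta|\le C(L(\alpha)+\mathrm{Area}(w))$ and so that the prefix $w(i)$ lying in the $k$-th block satisfies $d_S(\bar{w}(i),e)\le Cd_{\tilde M}(\alpha(k),\alpha(0))+C'$ (by the quasi-isometry $\Gamma(G,S)\sim\tilde M$ and $\mathrm{diam}(\Omega)<\infty$). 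Plugging this into (\ref{if}), using that $f$ is non-decreasing, $f\ge1$, the block lengths are bounded, and $d_{\tilde M}(\alpha(k),\alpha(0))\le 2d_{\tilde M}(\alpha(k),x_0)$, one compares the resulting prefix-sum to an integral block by block --- over $[k,k+1]$ the function $d_{\tilde M}(\alpha(\cdot),x_0)$ varies by at most $1$, and additive constants inside $f$ are absorbed by enlarging the inner constant (again using $f\ge1$) --- to obtain $\mathrm{Area}(w)\le C\int_\alpha f(Cd_{\tilde M}(x,x_0))\,ds$. Since also $L(\alpha)\le\int_\alpha f(Cd_{\tilde M}(x,x_0))\,ds$, the claim follows (the case $L(\alpha)<1$ being trivial, via the $\pi^*\omega$-area of a small filling disk). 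Note that an \emph{ordinary} isoperimetric inequality would here only yield $\mathrm{Area}(w)\le CL(w)f(CL(\alpha))$, with the \emph{length} of $\alpha$ rather than its distance to $x_0$ inside $f$, which is useless for the next step.

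\textbf{Step 3} repeats Lemma~\ref{l2.4} with the mass, comass and flat norms replaced by $f$-weighted ones: $\|c\|_f=\int_c f(Cd_{\tilde M}(x,x_0))$ for a $1$-chain $c$, $\|\theta\|_f=\sup_{\tilde M}|\theta|(x)/f(Cd_{\tilde M}(x,x_0))$ for a form $\theta$, and $|c|^\flat_f=\inf_D(\|c-\partial D\|_f+\|D\|_f)$. The functional $\tilde L(c)=\int_c\eta$ on $1$-cycles satisfies $|\tilde L(c)|\le C\|c\|_f$ by Step~2, and since $\pi^*\omega$ is bounded --- hence $|\pi^*\omega|\le Cf(Cd_{\tilde M}(\cdot,x_0))$ as $f\ge1$ --- the identity $\int_c\eta=\int_{c-\partial D}\eta+\int_D\pi^*\omega$ upgrades this to $|\tilde L(c)|\le C|c|^\flat_f$. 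Hahn--Banach extends $\tilde L$ to a functional $L$ on all $1$-chains of weighted flat norm $\le C$, and Whitney's representation theorem \cite[Theorem~5A]{Wh57}, applied on coordinate charts and patched as in Lemma~\ref{l2.4} (the local forms coincide on overlaps, since they all represent the single global $L$), yields measurable forms $\tilde\eta,\tilde\omega$ representing $L,dL$ with $\pi^*\omega=\tilde\omega=d\tilde\eta$ in the sense of currents. On a chart of bounded diameter at distance $r$ from $x_0$ the weight $f(Cd_{\tilde M}(\cdot,x_0))$ lies between $f(C(r-1))$ and $f(C(r+1))$, hence, after enlarging the inner constant, is comparable there to $f(Cd_{\tilde M}(\cdot,x_0))$ (using $f$ non-decreasing and $f\ge1$); Whitney's duality $\|\tilde\eta\|^\flat_f\le|L|^\flat_f$ then gives $|\tilde\eta|(x)\le Cf(Cd_{\tilde M}(x,x_0))$ a.e., which is the first assertion.

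\textbf{Step 4:} assume $f(\lambda)\le Ce^{C\lambda^2}$, so $\tilde\eta$ has growth $\le Ce^{Cd_{\tilde M}(x,x_0)^2}$. I would run the heat-equation smoothing of Theorem~\ref{t2.1} (equation (\ref{he})) verbatim: solve $(\partial_t-\triangle)\tilde\eta(t)=\pi^*d^*\omega$, $\tilde\eta(0)=\tilde\eta$, on $[0,T_0]$ with $T_0$ small relative to the constant in $f\le Ce^{C\lambda^2}$ (short-time existence in this growth class following from bounded geometry and Gaussian decay of the heat kernel). The heat-kernel representation together with the elementary bound $f(a+b)\le f(2a)+f(2b)$ for non-decreasing $f$ gives $|\tilde\eta(t)|(x)\le Cf(Cd_{\tilde M}(x,x_0))$ for all $t\in[0,T_0]$; the Bernstein estimate then gives $|\nabla\tilde\eta(t)|\le Ct^{-1/2}f(Cd_{\tilde M}(\cdot,x_0))+C$, so $\hat\omega(t):=\int_0^t(d\tilde\eta(s)-\pi^*\omega)\,ds$ has growth $\le Cf(Cd_{\tilde M}(x,x_0))\le Ce^{Cd_{\tilde M}(x,x_0)^2}$ and solves the homogeneous heat equation with zero initial data, whence the Bochner-formula/maximum-principle argument (\cite[Theorem~15.2]{Li}, \cite[Lemma~2.5]{Che10}) forces $\hat\omega\equiv0$, i.e.\ $d\tilde\eta(t)\equiv\pi^*\omega$. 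Then $\tilde\eta:=\tilde\eta(T_0)$ is smooth, satisfies $d\tilde\eta=\pi^*\omega$ and $|\tilde\eta|(x)\le Cf(Cd_{\tilde M}(x,x_0))$, and the proof is complete. I expect the main obstacle to be Step~2 --- extracting from the \emph{radial} inequality a bound in terms of the distance of $\alpha$ to the \emph{fixed} base point $x_0$, which needs the reparametrization to the closest point together with the careful passage from prefix-sum to integral; the weighted Whitney bookkeeping in Step~3 is the main purely technical point, while Steps~1 and~4 are routine adaptations of Theorem~\ref{t2.1}.
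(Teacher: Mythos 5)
Your proposal follows the same four-step architecture as the paper's own proof of Theorem \ref{t3.1}: word approximation of closed curves plus the radial inequality (\ref{if}) to get $\left|\int_\alpha\eta\right|\le C\int_\alpha f(Cd(x,x_0))\,ds$, then a weighted (conformal) flat norm with Hahn--Banach and Whitney's representation theorem, then heat-equation smoothing. Two remarks on where you differ. First, your Step 2 is actually \emph{more} careful than the paper's: the chain of inequalities needs $d(x_0,\bar{w}(i)(x_0))\approx d(\alpha(0),\alpha(k))\lesssim d(x_0,\alpha(k))$, and your normalization that $\alpha(0)$ be a point of $\alpha$ closest to $x_0$ is exactly what makes this termwise comparison valid (without it, a curve that lingers near $x_0$ after starting far away defeats the prefix-sum-to-integral comparison); the paper leaves this implicit in ``follows from the construction in the proof of Lemma \ref{l2.3}''. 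Second, the only substantive divergence is Step 4. You propose to solve the $1$-form heat equation directly with the merely measurable initial data $\tilde\eta$ of growth $e^{Cd^2}$ and assert short-time existence and Gaussian kernel bounds in that class. The paper deliberately avoids this: it reduces to the \emph{scalar} heat kernel by setting $v(x,t)=\int_{\tilde M}\langle d_yH(x,y,t),\tilde\eta(y)\rangle\,dy$ and $u(x,t)=\int_0^t v(x,t-s)\,ds$, checks that $\tilde\eta(t):=\tilde\eta-du$ is a weak solution of $(\frac{\partial}{\partial t}-\triangle)\tilde\eta(t)=\pi^*d^{\ast}\omega$, and then derives the pointwise bound $|\tilde\eta(t)|\le Cf(Cd(x,x_0))$ from a cutoff energy estimate combined with the Li--Tam mean value inequality, rather than from a kernel representation for forms. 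Your route is plausible, but the existence and pointwise representation of the form-valued heat semigroup on measurable data of Gaussian-square growth is precisely the technical point the paper's detour sidesteps; you should either justify it (e.g.\ by Kato-type domination of the Hodge heat semigroup by the scalar one on a manifold of bounded geometry) or adopt the paper's $v$-and-$u$ construction. With that caveat, the proposal is correct and essentially coincides with the paper's argument.
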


 \begin{proof} We shall use a similar strategy as in the proof of  Theorem \ref{t2.1}.
 Let $\pi^{\ast}\omega=d\eta$ hold for some smooth $1$-form $\eta$ on $\tilde{M}$.  As in the proof of Lemma
 \ref{l2.3}, there exists a constant $C$ such that
  for any closed curve $\alpha$ on $\tilde{M}$, we can construct a word
  $w\in F_S$ representing the identity and approximating $\alpha$ such that
  \beq \label{step2} \left|\int_{\alpha} \eta\right| \leq
C(L(\alpha)+\mathrm{Area}(w)).\eeq Since $f\geq 1$ and it is
non-decreasing, by (\ref{if}), we have \be
\mathrm{Area}(w)&\leq& C\sum_{i}f(d_{S}(\bar{w}(i), e))\\
&\leq& C\sum_i f(Cd(x_0, \bar{w}(i)(x_0))+C)\\
&\leq& C\int_0^{L(\alpha)} f(C d(x_0, \alpha(s)))ds \ee where the
second inequality follows from the quasi-isometry between $G$ and
$\tilde M$, and the last inequality follows from the construction in
the proof of Lemma \ref{l2.3}. Note also that the constant $C$
varies from line to line. As in Lemma \ref{l2.4}, we define the same
linear functional $\tilde{L}$ on the subspace of closed $1$-chains
$Z_1(K^{\infty})$ by (\ref{lf}). However, $\tilde L$ is not
necessarily  a bounded linear functional in the original norm. We
shall  define a new norm $||\cdot||_{\check{g}}$ on
$C_1(K^{\infty})$ by choosing a conformal Riemannian metric
$\check{g}(x)=f(C d(x_0,x))^2
  g(x)$ on $\tilde M$. In terms of this new norm,
 \beq
 \|\alpha\|_{\check{g}}=\int_{0}^{L} f(Cd(x_0, \alpha(s)))ds \geq
 \frac{1}{C}\mathrm{Area}(w).\label{ii}
 \eeq
 Combining (\ref{step2}) and (\ref{ii}), we get
 \begin{equation} \label{step3}
\left|\int_{\alpha} \eta\right| \leq C  ||\alpha||_{\check{g}} .
\end{equation}
Hence,
\begin{equation} \label{nfn}
|\tilde{L}(c)|\leq C( \|c-\partial D\|_{\check{g}}+\|D\|_{g})
\end{equation}
for any $2$-chain $D$ and  closed $1$-chain $c$. Now define a new
flat norm $$|c|^{\flat}_{\check{g}}= \inf_{D} \|c-\partial
D\|_{\check{g}}+\|D\|_{g}$$ on $C_1(K^{\infty})$.  It is not
difficult to show that the dual flat norm
$|\theta|^{\flat}=\max\{\|\theta\|_{\check{g}}, \|d\theta\|_{g}\}$
for any 1-form $\theta$ on $\tilde{M}$.   We have shown
$\|\tilde{L}\|^{\flat}_{Z_1(K^{\infty})} \leq C$ from (\ref{nfn}).
As before, $\tilde{L}$ can be extended to a bounded linear
functional $L$ on the whole space $C_1(K^{\infty})$. By
\cite[Theorem~5A]{Wh57} again,  $L$ is represented by a differential
form $\tilde \eta$ with measurable coefficients such that
$\pi^*\omega=d\tilde\eta$ and $$\|\tilde \eta\|_{\check{g}}\leq C$$
which is equivalent to
$$|\tilde \eta|_g(x) \leq Cf(Cd(x,x_0)).$$
This completes the first part of Theorem \ref{t3.1}.

 When  $f(\lambda)\leq Ce^{C\lambda^2}$, we  use the heat
equation method again to deform  the $1$-form $\tilde\eta$ to a
smooth one. We first construct a solution $v(x,t)$ to the following
equation: \beq \begin{cases}\left(\frac{\p}{\p t}-\Delta\right)v=0\\
v|_{t=0}=d^*\tilde\eta.
\end{cases}\label{v}
\eeq It is well-known that  the heat kernel $H(x,y,t)$  (for
functions) on $\tilde{M}$ exists for all $t\geq 0$, and it satisfies
the Gaussian type estimates
\begin{equation}\label{het} \begin{cases}
H(x,y,t) \leq C t^{-\frac{n}{2}} e^{-\frac{d(x,y)^2}{Ct}}\\
|\nabla_{y} H|(x,y,t)  \leq Ct^{-\frac{n+1}{2}}
e^{-\frac{d(x,y)^2}{Ct}}
\end{cases}
\end{equation}
for all $t\in [0,1]$. Combining with (\ref{het}),  there exists a
small $T_0>0$ such  that for all $x\in \tilde{M}$, $0<t\leq T_0$,
the function \beq  v(x,t)=\int_{\tilde{M}} \langle
d_yH(x,y,t),\tilde \eta(y)\rangle dy \label{dv}\eeq solves (\ref{v})
and satisfies
\begin{equation}|v|(x,t)\leq Ct^{-\frac{1}{2}}
f(Cd(x,x_0))).\label{name}\end{equation}  Indeed,  for $d(x,x_0)>1$,
we have
\begin{equation}\label{eov}
\begin{split}
|v|(x,t) \leq & \int_{B(x, 2d(x,x_0))} Ct^{-\frac{n+1}{2}} e^{-\frac{d(x,y)^2}{Ct}}f(C(d(x_0,y))dy\\&
+\sum_{k=1}^{\infty} \int_{B(x, 2^{k+1}d(x,x_0))\setminus  B(x,2^{k}d(x,x_0))} Ct^{-\frac{n+1}{2}} e^{-\frac{d(x,y)^2}{Ct}}f(C(d(x_0,y))dy.\\
\end{split}
\end{equation}
The first term is dominated by
$$
C f(Cd(x_0,x)) t^{-\frac{1}{2}} \int_{B(x, 2d(x,x_0))}
t^{-\frac{n}{2}} e^{-\frac{d(x,y)^2}{Ct}}\leq C t^{-\frac{1}{2}}
f(Cd(x_0,x)).
$$
 Note also that  $f(C(d(x_0,y))) \leq Ce^{C4^kd(x_0,x)^2}$ on
$B(x, 2^{k+1}d(x,x_0))$ and
$$ {\mathrm{vol}(B(x, 2^{k+1}d(x,x_0)))}\leq e^{C2^{k+1}d(x,x_0)} $$ by volume comparison theorem since the Ricci curvature is bounded from below.
Hence, the second term of (\ref{eov}) can be  dominated by
$$
Cd(x_0,x)^{-(n+1)}{\left(\frac{d(x,x_0)}{\sqrt{t}}\right)}^{{n+1}}\sum_{k=1}^{\infty}
e^{-4^{k}\frac{d(x_0,x)^2}{Ct}} \cdot e^{C4^kd(x,x_0)^2} \leq C
d(x_0,x)^{-(n+1)}\leq C
$$
when $t<T_0$, where $T_0$ is a suitable small constant. If
$d(x,x_0)\leq 1$, we  have $d(x_0,y)\leq d(x,y)+1$, and \beq
|v|(x,t) \leq  \int_{\tilde M} Ct^{-\frac{n+1}{2}}
e^{-\frac{d(x,y)^2}{Ct}} e^{C d(x,y)^2}dy\leq C
t^{-\frac{1}{2}},\eeq which completes the proof of (\ref{name}).

Let $u(x,t)=\int_{0}^{t} v(x, t-s)ds$, then by (\ref{v}), it is easy
to verify that $u$ is a weak solution to the following
  heat equation  on $\tilde{M}$:
\begin{equation}
\begin{cases}
(\frac{\partial}{\partial t}-\triangle)u=d^{\ast} {\tilde \eta}\\
u\mid_{t=0}=0
\end{cases}
\end{equation}
and $u$ satisfies $$|u|(x,t)\leq C\sqrt{t} f(Cd(x,x_0)).$$ Multiply
both sides of the above equation with $u\xi$, where $\xi$ is a
standard cutoff function which is $1$ on $B(x,1)$ and $0$ outside of
$B(x,2)$. An integration by part shows
\begin{equation}
\int_{0}^{T_0} \int_{B(x,1)} |du|(y,t)^2dydt\leq C \int_{0}^{T_0}
\int_{B(x,2)} u^2(y,t)+|\tilde \eta|(y)^2 dydt
\end{equation}
which implies
\begin{equation} \label{ine}
\int_{0}^{T_0} \int_{B(x,1)} |du|(y,t)^2+|\tilde \eta|^2(y)dydt\leq
C f^2(C(d(x,x_0)).
\end{equation}
 It is easy to verify that $\tilde \eta(t)=\tilde\eta-du$
is a weak solution of  the following equation  on $\tilde{M}\times
(0,T_0]$:
\begin{equation}\label{hhe}\begin{cases}
 \left(\frac{\partial}{\partial t}-\triangle\right) {\tilde \eta}(t)  =\pi^*d^{\ast} \omega\\
 \tilde{\eta}(0)={\tilde \eta}.
\end{cases}
\end{equation}

\noindent Similar to the equation (\ref{he}) in the proof of Theorem
\ref{t2.1}, we know  $\tilde\eta(x,t)$ is a smooth solution on
$\tilde{M}\times (0,T_0]$. A standard computation shows
\begin{equation}\label{hhei}
 \left(\frac{\partial}{\partial t}-\triangle\right) |{\tilde\eta}(t)|^2  \leq -2|\nabla\tilde \eta|(t)^2 +C|\tilde\eta|(t)^2+C.
\end{equation}
Hence, we  obtain
 $$\left(\frac{\partial}{\partial t}-\triangle\right)\left(|{\tilde\eta}(t)|^2+1\right)e^{-Ct}  \leq
 0.$$
By the well-known result \cite[Theorem~1.2]{LT91} for non-negative
subsolution of heat equation, we obtain
$$
|\tilde \eta|^2(x,t) \leq 2 \sup_{y\in B(x,1)}|\tilde \eta|^2(y,0) +
C\int_{0}^{T_0} \int_{B(x,1)} |\tilde \eta|^2(y,t)dydt+C \leq
Cf^2(Cd(x,x_0)),
$$ where the second inequality follows from (\ref{ine}).
On the other hand,  a  local gradient estimate on $\tilde \eta$
implies
\begin{equation}\label{lge1}
|\nabla\tilde{\eta}|(x,t) \leq C t^{-\frac{1}{2}}f(Cd(x,x_0)).
\end{equation}

\noindent As in the previous section, let
$\hat{\omega}(t)=\int^{t}_{0} (d\tilde\eta(s)-\pi^*\omega)ds$, then
$\hat{\omega}(t)$
 satisfies
\begin{equation}\label{hhe2}\begin{cases}
 \left(\frac{\partial}{\partial t}-\triangle\right) \hat{\omega}(t)  =0,\\
 \hat{\omega}(0) =0.
\end{cases}
\end{equation}
The estimate (\ref{lge1})  implies $|\hat{\omega}|(x,t) \leq
Ce^{Cd^2(x,x_0)}$. As in the proof  of Theorem \ref{t2.1}, by
maximum principle (\cite[Theorem~15.2]{Li} or \cite[Lemma
2.5]{Che10}), we obtain $\hat{\omega}(t)\equiv 0$. Therefore
$\tilde\eta(T_0)$ is the desired  smooth $1$-form. The proof of
Theorem \ref{t3.1} is completed.
 \end{proof}

\vskip 1\baselineskip

\noindent \emph{The proof of Theorem \ref{t2}.} For $p\geq 1$, we
choose $f(t)=(1+t)^{p-1}$ in Theorem \ref{t3.1}. Hence, there exists
a smooth $1$-form $\eta$ on $\hat{M}$  such
    that  $\pi^{\ast}\omega=d\eta$ and
 $$|\eta|(x)\leq C(d_{\hat{M}}(x,x_0)+1)^{p-1}
 $$
for all   $x\in \hat{M}$
 where $C$ is a positive constant and  $x_0\in \hat{M}$ is a fixed
 point.
 \qed

\vskip 1\baselineskip

\noindent \emph{The proof of Theorem \ref{t3}.} By Theorem \ref{t2},
there exists  a smooth $1$-form $\eta$ on the universal covering
$\tilde{M}$ with linear growth such    that
$\pi^{\ast}\omega=d\eta$, i.e., $M$ is K\"ahler non-elliptic.  By
\cite{JZ00} and \cite{CX01},  the Euler characteristic
$(-1)^{\frac{\dim_\R M}{2}}\chi(M)\geq 0$. \qed

\vskip 2\baselineskip

 \section{Quadratic  radial isoperimetric inequality}\label{ex}

In this section, we prove that automatic groups and
$\mathrm{CAT}(0)$ groups
 satisfy the radial isoperimetric inequality
(\ref{pii}) for $p=2$ and establish Theorem \ref{t1}.

  \begin{thm} \label{aq}  Automatic groups  satisfy the  quadratic radial  isoperimetric inequality   (\ref{pii}) for $p=2$.
\end{thm}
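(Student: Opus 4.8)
The plan is to unpack the definition of an automatic group and use the \emph{fellow traveler property} to produce, for each word $w$ representing the identity, an explicit van Kampen–type filling whose area is controlled termwise along the word, which is exactly the shape of the radial inequality (\ref{pii}) for $p=2$. Let $G = \la S \mid R\ra$ be automatic with automatic structure given by a regular language $L \subseteq F_S$ of normal forms: for each $g \in G$ there is $\sigma_g \in L$ with $\bar\sigma_g = g$, every prefix of a word in $L$ is (synchronously) a $k$-fellow traveler of the geodesic from $e$ to its endpoint, and for each $s \in S$ the paths $\sigma_g$ and $\sigma_{gs}$ synchronously $k$-fellow travel. In particular there is a linear bound $|\sigma_g| \le A\, d_S(e,g) + A$ on the length of normal forms, and every prefix $\bar\sigma_g(i)$ lies within distance $k$ of the segment $[e,g]$, so $d_S(e, \bar\sigma_g(i)) \le d_S(e,g) + k$.

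First I would set up the filling. Write $w = s_1\cdots s_n$ with $\bar w = e$, and let $g_i = \bar w(i) = \overline{s_1\cdots s_i}$, so $g_0 = g_n = e$. For each $i$ form the normal-form loop based at $e$: $\ell_i := \sigma_{g_{i-1}}\, s_i\, \sigma_{g_i}^{-1}$, a word representing the identity in $G$. Telescoping, $w$ is equal in $F_S$ — after inserting cancelling pairs $\sigma_{g_i}\sigma_{g_i}^{-1}$ — to a product of conjugates of the $\ell_i$ (conjugated by prefixes of $w$, which themselves have length $\le n$), so $\mathrm{Area}(w) \le \sum_{i=1}^n \mathrm{Area}(\ell_i)$ up to the bookkeeping of the insertions, which contribute no area. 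It therefore suffices to bound each $\mathrm{Area}(\ell_i)$ by $C\,(d_S(e,g_i)+1)$; summing then gives $\mathrm{Area}(w) \le C\sum_{i=1}^n (d_S(e,g_i)+1)$, which is (\ref{pii}) with $p = 2$ since $d_S(e, \bar w(i)) = d_S(e,g_i)$.

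The heart of the argument is the standard estimate $\mathrm{Area}(\sigma_g\, s\, \sigma_{gs}^{-1}) \le C\,(d_S(e,g)+1)$ coming from the synchronous fellow-traveler property. Here I would subdivide the bigon between $\sigma_g$ and $\sigma_{gs}$ into roughly $\max(|\sigma_g|,|\sigma_{gs}|) \le A\,d_S(e,g)+A'$ quadrilaterals, each with perimeter bounded by $2k + 2$ (two fellow-traveling ``rungs'' of length $\le k$ and two steps of length $1$); each such bounded-perimeter loop can be filled using at most $N_0$ relators, where $N_0$ depends only on $\max\{L(r): r \in R\}$ and $k$ (the number of loops of bounded length is finite, so one takes $N_0$ to be the maximum of their combinatorial areas). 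Hence $\mathrm{Area}(\ell_i) \le N_0\,(A\, d_S(e,g_i) + A')$, as required. I expect the main obstacle to be handling the telescoping cleanly: one must verify that assembling the loops $\ell_i$ back into $w$ really does only insert cancelling pairs and conjugations (by prefixes of $w$) and does not secretly require extra relators — this is exactly the classical ``van Kampen diagram from an automatic structure'' construction, so the resolution is to cite/reproduce that standard combinatorial lemma and simply track that the area is additive over the $\ell_i$. The fact that the bound is \emph{radial} — each term depending on $d_S(e,g_i)$ rather than on $n$ — is automatic from the construction once this additivity is in place, and is the only reason the stronger inequality (\ref{pii}) holds rather than merely the quadratic $\mathrm{Area}(w) \le C\,L(w)^2$.
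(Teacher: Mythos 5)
Your proposal is correct and follows essentially the same route as the paper: decompose $w$ into the telescoping product of normal-form bigons $\sigma_{g_{i-1}} s_i \sigma_{g_i}^{-1}$, use the synchronous fellow-traveler property to fill each bigon with quadrilaterals of perimeter at most $2k+2$, and bound the number of quadrilaterals by the normal-form length $|\sigma_{g_i}|\le A\, d_S(e,g_i)+A$, which the paper derives (its inequality (\ref{aaq})) from the fact that normal forms of $S$-adjacent elements have lengths differing by at most $K$. The only quibble is that your stated hypothesis that prefixes of normal forms fellow-travel geodesics is not part of the definition of an automatic structure (and is not needed anywhere in your argument); everything you actually use is standard and matches the paper's proof.
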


The notion of  automatic groups  was first introduced by Epstein.
Automatic groups are finitely generated groups whose operations are
governed by automata.  Many interesting groups are automatic, e.g.
hyperbolic groups and mapping class groups (\cite{Mos95}). We only
give a brief description of the necessary notions used in the proof
of Theorem \ref{aq}, and for more details, we refer to \cite{CE92,
Ge,Oh} and the references therein.

 We shall follow the presentation
in \cite[Section~8]{Ge}. Let $S$ be a finite set and
$S^{\ast}=\{w=s_1\cdots s_n\ |\ s_i\in S\}$ be the set of  finite
sequences of letters over $S$.  A subset $L\subset S^{\ast}$ is
called a language. A finite state automaton $\mathcal{A} $ over $S$
is a $5$-tuple $\mathcal{A}=(\Gamma, i_o, S,\lambda,Y)$ satisfying

\bd \item $\Gamma$ is a finite directed graph with vertex set
$V(\Gamma)$(states) and edge set $E(\Gamma)$ (transitions);

 \item $i_o$ is a distinguished vertex of $\Gamma$ called ``initial
 state";

 \item $\lambda: E(\Gamma)\rightarrow S$ is a map labelling $E(\Gamma)$ by
 $S$;

 \item $Y\subset V(\Gamma)$ is a set of states, called ``final states".
 \ed

\noindent  For a finite state automaton $\mathcal{A}$, the language
$L(\mathcal{A})$ is the set of labels $\lambda(p)$
  of all directed paths $p$ of $\Gamma$ which begin at $i_o$ and end at any state of $Y$.
  A language $L\subset S^{\ast}$ is said to be regular if there is a finite state automaton $\mathcal{A}$ with alphabet $S$ such that $L=L(\mathcal{A})$.

  Let $G$ be a group with a finite symmetric
generator set $S=S^{-1}$. Let  $\mu: S^*\rightarrow G$ be the
evaluation map. A pair $(S,L)$ is called an automatic structure on
$G$  if $L\subset S^{\ast}$ is  a regular language satisfying \bd
 \item $\mu: L\rightarrow G$ is surjective;

 \item  there exists a number $k>0$ such  that if $w,w'\in L$ satisfy $$d_{\Gamma(G,S)}(\mu(w),\mu(w'))\leq 1,$$
 then $w,w'$ satisfy the  $k$-fellow  traveler property: \begin{equation} \label{kftp}d_{\Gamma(G,S)}(\mu(w(i)),\mu(w'(i)))\leq k \end{equation}
for all $i$ where $w(i)=s_1\cdots s_i$ is the $i$-th subword of
$w=s_1\cdots s_n$.
  \ed

  \noindent A group is called automatic if it admits an automatic structure. It is well-known that (e.g. \cite[Theorem~2.5.1]{CE92} or \cite[Section~9]{Ge}),
  for any automatic structure $(S,L)$ on $G$,
 there exists   another automatic structure $(S, L')$, where   $L'\subset L$ can be mapped bijectively onto $G$ by the evaluation map $\mu$ and $L'$ is
   also a regular language for a possibly different  finite state automaton.
   Moreover, there exists a constant $K>0$ such that
     for any $w , w'\in L'$ with $d_S(\mu(w),\mu(w'))=1$ one has \begin{equation}\label{dol}
  |L(w)-L(w')| \leq K, \end{equation}
where $L(w)$ and $L(w')$ are the word lengths.

 It is well-known that the automatic groups
satisfy the classical quadratic isoperimetric inequality (e.g.
\cite[Theorem~9.7]{Ge}).  In the following, we shall use a similar
strategy to prove that automatic groups also satisfy the stronger
radial isoperimetric inequality (\ref{pii}) for $p=2$. \vskip
1\baselineskip

 \noindent \emph{The proof of Theorem
\ref{aq}.} Let $(S,L)$ be an automatic structure on $G$ such that
the evaluation map $\mu: L\rightarrow G$ is bijective. Let
$w=s_1s_2\cdots s_n$ be  a reduced word in the free group $F_S$
representing the identity $e$ of $G$. For  $i=1,\cdots, n$,  let
$w(i)=s_1\cdots s_i$ and $\bar w(i)=\mu(w(i))$. Let $p_i$ be the
element in $L$ satisfying $\mu(p_i)=\bar w(i)$. We claim \beq L(p_i)
\leq K d_S(\bar w(i), e). \label{aaq} \eeq

\noindent  Indeed, let $a_1a_2\cdots a_{\ell(i)}$ be a reduced word
representing a minimal geodesic connecting $e$ and $\bar w(i)$ where
$a_j\in S$ and  $\ell(i)=d_S(\bar w(i), e)$. Let $p'_j\in L$ satisfy
$\mu(p'_j)=\mu(a_1\cdots a_j)$ for $j=1,\cdots, \ell(i)$. From
(\ref{dol}), we know $|L(p'_j)-L(p'_{j+1})| \leq K$. This implies
$$L(p'_{\ell({i})})\leq K\ell(i)=K d_S(\bar w(i), e).$$ Note that
$p'_{\ell(i)}=p_i$ since $\mu(p'_{\ell(i)})=\mu(p_i)=\bar w(i)$ and
$\mu$ is bijective from $L$ to $G$. Hence, we obtain (\ref{aaq}).

From the $k$-fellow traveler property (\ref{kftp}), we get
$d_S(\mu(p_{i+1}(t)), \mu(p_{i}(t))) \leq k$ for all $t$ where
$p_{i}(t)$ is the $t$-th subword of $p_i$. For fixed $i$ and $t \leq
\max \{ L(p_i), L(p_{i+1})\}$, connecting the five points
$$\mu(p_{i}(t)),\ \ \mu(p_{i}(t+1)),\ \  \mu(p_{i+1}(t+1),\ \
\mu(p_{i+1}(t),\ \ \mu(p_{i}(t))
$$ successively with $4$ minimal geodesics, we get a closed loop
$\gamma^{i}_{t}$ of length $\leq 2k+2$.  The total number of the
loops  $\gamma^{i}_{t}$  is bounded by $$ \sum_{i=1}^{L(w)}
\left(\max \{ L(p_i), L(p_{i+1})\}+1\right)\leq 2K \sum_{i=1}^{L(w)}
(d_S(\bar w(i), e)+1).$$ We  assume that the set $R$ of relators
 consists of all words in $F_S$ representing $e$ with length  $\leq 2k+2$. We obtain the radial
isoperimetric inequality (\ref{pii}) for $p=2$, i.e. \beq
\mathrm{Area}(w) \leq  2K \sum_{i=1}^{L(w)} [d_S(\bar w(i),e) +1].
\eeq The proof of Theorem \ref{aq} is complete. \qed

 \begin{thm}\label{bq}  $\mathrm{CAT}(0)$-groups satisfy  the  quadratic radial  isoperimetric inequality (\ref{pii}) for $p=2$.
 \end{thm}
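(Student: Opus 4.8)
The plan is to run the argument from the proof of Theorem~\ref{aq} essentially verbatim, with the combing of $G$ by automatic normal forms replaced by the combing of $G$ by $\mathrm{CAT}(0)$-geodesics. Fix a proper geodesic $\mathrm{CAT}(0)$ space $X$ on which $G$ acts properly discontinuously and cocompactly by isometries, a basepoint $o\in X$, and put $\lambda=\max_{s\in S}d_X(o,so)$. Since the action is geometric, the orbit map $g\mapsto g\cdot o$ is a quasi-isometry between $(G,d_S)$ and $X$, the orbit $G\cdot o$ is $D$-dense in $X$ for some $D>0$, and $G$ is finitely presented; enlarging the relator set, I may assume $G=\la S|R\ra$ with $R$ the set of \emph{all} words of length $\le N_0$ in $F_S$ representing $e$, for a constant $N_0$ fixed below (this is legitimate because a group acting geometrically on a $\mathrm{CAT}(0)$ space is finitely presented and short relators normally generate all relations once $N_0$ is large).

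The geometry enters only through one elementary convexity estimate: if $\gamma_x\colon[0,\infty)\to X$ denotes the unit-speed geodesic from $o$ to $x$, extended to be constant equal to $x$ for $t\ge d_X(o,x)$, then $d_X(\gamma_x(t),\gamma_y(t))\le 2\,d_X(x,y)$ for all $t\ge0$ and all $x,y\in X$; this follows by comparing the affine reparametrizations of $\gamma_x$ and $\gamma_y$, using convexity of the $\mathrm{CAT}(0)$ metric together with $|d_X(o,x)-d_X(o,y)|\le d_X(x,y)$. I then discretize this continuous combing into a synchronous combing of the Cayley graph: for $g\in G$ set $\ell_g=\lceil d_X(o,g\cdot o)\rceil$, write $\gamma_g:=\gamma_{g\cdot o}$, and for each integer $0\le t\le\ell_g$ choose $\sigma_g(t)\in G$ with $d_X(\gamma_g(t),\sigma_g(t)\cdot o)\le D$, taking $\sigma_g(0)=e$ and $\sigma_g(t)=g$ for $t\ge\ell_g$; joining consecutive $\sigma_g(t)$'s by geodesics in $\Gamma(G,S)$ gives a path $\sigma_g$ from $e$ to $g$. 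Because $d_S(g,g')\le1$ forces $d_X(g\cdot o,g'\cdot o)\le\lambda$, the convexity estimate, the $D$-density of the orbit, and the quasi-isometry together produce a constant $k$ such that, whenever $d_S(g,g')\le1$,
$$ d_S\big(\sigma_g(t),\sigma_{g'}(t)\big)\le k \qquad\text{and}\qquad d_S\big(\sigma_g(t),\sigma_g(t{+}1)\big)\le k \qquad\text{for all }t, $$
so $\{\sigma_g\}_{g\in G}$ is a synchronous $k$-fellow-traveler combing; moreover $\ell_g\le C\,(d_S(g,e)+1)$ by the quasi-isometry.

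With this combing in hand, the area count is identical to the one in Theorem~\ref{aq}. Let $w=s_1\cdots s_n$ be a reduced word representing $e$, put $\bar w(i)=\mu(w(i))$ and $\sigma_i=\sigma_{\bar w(i)}$, so that $w\equiv\prod_{i=0}^{n-1}\sigma_i\,s_{i+1}\,\sigma_{i+1}^{-1}$ as elements of $F_S$ ($\sigma_0$ and $\sigma_n$ being empty). For each $i$ and each integer $0\le t<\max\{\ell_{\bar w(i)},\ell_{\bar w(i+1)}\}$ the loop through $\sigma_i(t),\sigma_i(t{+}1),\sigma_{i+1}(t{+}1),\sigma_{i+1}(t)$, closed up by geodesics in $\Gamma(G,S)$, has length at most $4k$; fixing $N_0=4k$, each such loop is a single relator of $R$, and stacking these loops fills $\sigma_i\,s_{i+1}\,\sigma_{i+1}^{-1}$. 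Therefore
$$ \mathrm{Area}(w)\ \le\ \sum_{i=0}^{n-1}\max\{\ell_{\bar w(i)},\ell_{\bar w(i+1)}\}\ \le\ 2\sum_{i=0}^{n}\ell_{\bar w(i)}\ \le\ C\sum_{i=1}^{n}\big(d_S(\bar w(i),e)+1\big), $$
which is precisely (\ref{pii}) with $p=2$.

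The main obstacle is the second paragraph: converting the honest $\mathrm{CAT}(0)$-geodesic combing of $X$ into a synchronous combing of $\Gamma(G,S)$ that simultaneously has a uniform fellow-traveler constant and the linear bound $\ell_g\le C(d_S(g,e)+1)$, which requires using the quasi-isometry and the cocompactness carefully. By contrast, the convexity estimate is routine and the final count is pure bookkeeping copied from Theorem~\ref{aq}. A secondary point to handle is the choice of presentation with uniformly bounded relator length, which is harmless for the reason indicated in the first paragraph.
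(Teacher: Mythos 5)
Your proposal is correct and follows essentially the same route as the paper's proof: both comb $G$ by discretizing the $\mathrm{CAT}(0)$ geodesics from a basepoint to the orbit points $\bar w(i)\cdot o$, use convexity of the metric to get a uniform fellow-traveler bound between consecutive combing paths, and then fill $w$ with the resulting quadrilateral loops of bounded length, counting at most $C(d_S(\bar w(i),e)+1)$ of them per letter. The only cosmetic differences are your unit-speed/constant-extension normalization of the geodesics (giving the factor $2$ in the convexity estimate, where the paper instead compares points at equal arc-length parameters and handles the endpoint case separately) and the paper's explicit construction of the generating set $S=\{g: g\Omega\cap\Omega\neq\emptyset\}$ rather than an appeal to quasi-isometry invariance.
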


 Recall that a $\mathrm{CAT}(0)$-group is a group which can act properly discontinuously and cocompactly by isometries on a proper geodesic
  $\mathrm{CAT}(0)$-metric space $(X,d)$. A geodesic metric space $(X,d)$ is called a  $\mathrm{CAT}(0)$-metric space if it satisfies the following properties.
  For any geodesic triangle $\triangle_{ABC}$ in $X$, let $\triangle_{\bar{A}\bar{B}\bar{C}}$ be the comparison triangle
   in the Euclidean plane with the same side lengths as  $\triangle_{ABC}$, i.e. $d(A,B)=d(\bar{A},\bar{B})$,
   $d(B,C)=d(\bar{B},\bar{C})$, $d(C,A)=d(\bar{C},\bar{A})$. The triangle $\triangle_{ABC}$ is at least as thin as
      $\triangle_{\bar{A}\bar{B}\bar{C}}$, i.e., for any point $D$ on the side $BC$, let $\bar D$ be the corresponding point on $\bar{BC}$,
       with $d(B,D)=d(\bar B, \bar D)$, we must have $d(A,D)\leq  d(\bar A,\bar D)$.
 Typical examples of $\mathrm{CAT}(0)$-groups are fundamental groups of compact manifolds with non-positive sectional
 curvatures.\\

 \noindent \emph{The proof of Theorem \ref{bq}.} 
 Let $(X,d)$ be the proper
  geodesic $\mathrm{CAT}(0)$-metric space on which $G$ acts properly discontinuously and cocompactly.     Fix a point $p\in X$
  and let
     $\Omega=B(p, 4D)$ be an open ball of radius $4D$ centered at $p$,  where ${D}$ is the diameter of the quotient space $X/G$.
         It is clear that for  any $q\in X$, there exists some $g\in G$ such  that  $g(q)\in B(p,2D)$.  Let $S=\{g\in G\ |\  g(\Omega)\cap \Omega \neq
         \emptyset\}$. It is obvious that $S$ is a finite symmetric subset of $G$.  Moreover, $S$ is a generating set of
         $G$. Indeed, for any $g\in G$, let $$\gamma:[0, d(p, g(p))]\>X$$
   be a minimal geodesic connecting $p$ and $g(p)$. We may assume $d(g(p),p)>2D$, otherwise $g\in
   S$. For each $i=1,2,\cdots, k=[\frac{d(g(p),p)}{D}]$, we can choose $p_i\in B(p,2D)$ and $g_i\in G$ such that $g_i(p_i)=\gamma(iD)$.
   Since $d(g^{-1}_{i}g_{i+1}(p_{i+1}), p_{i})\leq D$, we know $g^{-1}_{i}g_{i+1}(p_{i+1})\in B(p,3D)\subset \Om$ and so  $s_i:=g^{-1}_{i}g_{i+1}\in S$,
   and $s_k:=g^{-1}_kg\in S$. Then $g=g_k s_{k}=g_{k-1} s_{k-1}s_{k}=\cdots=g_1s_1s_2 \cdots
   s_{k}$. It is obvious $g_1\in S$. Hence $G$ is generated by
   $S$.

      Let $\Gamma(G,S)$ be the Cayley graph of $G$ over $S$. For any $g_1, g_2\in G$, we have
 \begin{equation}\label{qui}
  \frac{d(g_1(p), g_2(p))}{12D}\leq d_{S}(g_1, g_2)\leq \left[\frac{d(g_1(p), g_2(p))}{D}\right]+1.
 \end{equation}
 The second inequality has been proved by the above argument. The first inequality follows from the triangle inequality:
  let $g^{-1}_2g_1=s_1\cdots s_k$ with $s_i\in S$,  then
 \begin{equation}
 \begin{split}
 d( s_1\cdots s_k(p),p) & \leq  d( s_1\cdots s_k(p),s_1\cdots s_{k-1}(p))
 +\cdots+d(s_1(p),p)\\
 &=d(s_k(p),p)+\cdots+d(s_1(p),p)\\
 &\leq k\cdot 12D,
 \end{split}
 \end{equation}
where the last inequality follows from the fact that
$d(s_i(p),p)\leq 4D+4D+4D=12D$ for each $i$.
 Let $w=s_1s_2\cdots s_k\in F_S$ be a word representing the identity $e$ in $G$, $w(i)=s_1\cdots s_i$ be the $i$-th subword of $w$,
  $i=1,2,\cdots, k$ and $\bar w(i)$ the natural image in $G$. Connecting $p$ and $\bar{w}(i)(p)$
  with a minimal geodesic $$\gamma_i(s):[0, d(p, \bar{w}(i)(p)]\>X.$$

  \noindent We
 choose $\left[\frac{d(p,\bar{w}(i)p}{D}\right]+1$ points on $\gamma_i $ as above.
Let  $p^{j}_i=\gamma_i(jD)$ for $j\leq
\left[\frac{d(p,\bar{w}(i)(p))}{D}\right]$, and $p^{
\left[\frac{d(p, {w}(i)(p))}{D}\right]+1}_i=\bar{w}(i)(p) $.  Note
that
$$
d(\bar {w}(i)(p), \bar{w}(i+1)(p))=d(s_1\cdots s_i(p), s_1\cdots
s_{i+1}(p))=d(p, s_{i+1}(p))\leq 12D.
$$

Applying $\mathrm{CAT}(0)$ inequality, we obtain
\begin{equation} \label{bg} d(p^j_i, p^{j}_{i+1})\leq d(\bar{w}(i)(p),
\bar{w}(i+1)(p))  \leq 12D
\end{equation}
for all $j>0$. Actually, this can be seen from the following simple
fact: for any  triangle $\triangle_{ABC}$ on the plane
$\mathbb{R}^2$ with $|AB|\leq |AC|$, let $E$ and  $E'$ lie on  the
sides $AB$ and $AC$ respectively and $|AE|=|AE'|$, then $|EE'|\leq
|BC|$. Note that if $E=B$ and $|AB|\leq |AE'|$,   we still have
$|BE'|\leq |BC|$. If we choose $g^j_{i}\in G$ such that $d(g^j_i(p),
p^j_i) \leq 2D$,
  then by (\ref{bg}), we have
$$
d(g^j_i(p), g^{j}_{i+1}(p))\leq 16D.
$$

\noindent  By (\ref{qui}), we have   $d_{S}(g^{j}_{i},
g^j_{i+1})\leq 17$. Similarly, one has $d_{S}(g^{j}_{i},
g^{j+1}_{i})\leq 6$. On the Cayley graph $\Gamma(G,S)$,  connecting
the five points
$$g^{j}_{i},\ \  g^{j+1}_{i},\ \  g^{j+1}_{i+1},\ \  g^{j}_{i+1},\ \
g^{j}_{i}$$ successively with $4$ minimal geodesics,    we get a
closed loop $\sigma^j_{i}$ of length $\leq 46$. Let $w'$ be the word
representing the union  $\sigma^1_1\cup \sigma^2_1 \cdots $ of
closed loops, then $w=w'$ as elements in $F_S$. Define the relator
set $R=\{r\in F_S\ | \ \bar{r}=e, L(w)\leq 46 \}$, then $R$ is a
finite set. From the above argument, we obtain
$$\mathrm{Area}(w)\leq 2\sum^{L(w)}_{i=1} \left(\left[\frac{d(p,\bar{w}(i)(p))}{D}\right]+1\right)\leq
24 \sum^{L(w)}_{i=1} \left[d_S(\bar{w}(i),e)+1\right]
$$ where the second inequality follows from (\ref{qui}).   This completes the proof of Theorem \ref{bq}.
 \qed

\vskip 1\baselineskip

\noindent\emph{The proof of Theorem \ref{t1}.} If $\pi_1(G)$
 is automatic or
$\mathrm{CAT}(0)$, then by Theorem \ref{aq} and Theorem \ref{bq}, we
know $G$ satisfies the radial isoperimetric inequality (\ref{pii})
for $p=2$. Hence, by Theorem \ref{t3}, $M$ is K\"ahler non-elliptic
 and the Euler characteristic
 $(-1)^{\frac{\dim_\R M}{2}}\chi(M)\geq 0$. \qed

\end{document}